\date{}
\newlength{\defbaselineskip}
\newcommand{\setlinespacing}[1]%
           {\setlength{\baselineskip}{#1 \defbaselineskip}}
\newcommand{\N}{{\mathbb{N}}}
\newcommand{\actaqed}{\hfill $\actabox$}
{\medskip\noindent \textit{Proof of #1. }}%
{\actaqed \medskip}
\def\cB{\mathcal B}
\def \cF{\mathcal F}
\def\R{{\mathbb R}}
\def\Z{\mathbb Z}
\def \<{\langle}
\def\>{\rangle}
\def\la{\lambda}
\def \supp{\operatorname{supp}}
\def\ba{\mathbf a}
\def\bb{\mathbf b}
\def\bx{\mathbf x}
\def\by{\mathbf y}
\def\bz{\mathbf z}
\def\bk{\mathbf k}
\def\bu{\mathbf u}
\def\bm{\mathbf m}
\def\bs{\mathbf s}
\def\btt{\mathbf t}
\newtheorem{Theorem}{Theorem}[section]
\newtheorem{Lemma}{Lemma}[section]
\newtheorem{Corollary}{Corollary}[section]
\newtheorem{Conjecture}{Conjecture}[section]
\numberwithin{equation}{section}
\newcommand{\be}{\begin{equation}}
\newcommand{\ee}{\end{equation}}
\begin{document}

\title{Fixed volume discrepancy in the periodic case}
\author{V.N. Temlyakov\thanks{University of South Carolina and Steklov Institute of Mathematics.  }}
\maketitle
\begin{abstract}
{The smooth fixed volume discrepancy in the periodic case is studied here. It is proved 
that the Frolov point sets adjusted to the periodic case have optimal in a certain sense 
order of decay of the smooth periodic discrepancy. The upper bounds for the $r$-smooth fixed volume periodic discrepancy for these sets are established. }
\end{abstract}

\section{Introduction} 
\label{I} 

Discrepancy theory is a classical well established area of research in geometry and 
numerical integration (see \cite{BC}, \cite{Mat}, \cite{TBook}, \cite{T11}). Recently, in \cite{VT163}, a new phenomenon has been discovered. A typical upper bound for 
the discrepancy of a good point set of cardinality $m$ is $\le C(d) m^{-1}(\log m)^{d-1}$
and for the $r$-smooth discrepancy $\le C(d,r) m^{-r}(\log m)^{d-1}$. These bounds are 
too rough for functions with small volume of their support. It was proved in \cite{VT163}
that for the Fibonacci point sets ($d=2$) and the Frolov point sets we can improve the above upper bound to $\le C(d,r) m^{-r}(\log mV)^{d-1}$, $V\ge c(r,d)/m$, for the functions with the volume of their support equals $V$. We establish a similar phenomenon 
for the $r$-smooth fixed volume discrepancy in the periodic case. 

We begin with a classical definition of discrepancy ("star discrepancy", $L_\infty$-discrepancy) of a point set $T:=\xi := \{\xi^\mu\}_{\mu=1}^m\subset [0,1)^d$. 
Let $d\ge 2$ and $[0,1)^d$ be the $d$-dimensional unit cube. For $\bx,\by \in [0,1)^d$ with $\bx=(x_1,\dots,x_d)$ and $\by=(y_1,\dots,y_d)$ we write $\bx < \by$ if this inequality holds coordinate-wise. For $\bx<\by$ we write $[\bx,\by)$ for the axis-parallel box $[x_1,y_1)\times\cdots\times[x_d,y_d)$ and define
$$
\cB:= \{[\bx,\by): \bx,\by\in [0,1)^d, \bx<\by\}.
$$

Introduce a class of special $d$-variate characteristic functions
$$
\chi^d := \{\chi_{[\mathbf 0,\bb)}(\bx):=\prod_{j=1}^d \chi_{[0,b_j)}(x_j),\quad b_j\in [0,1),\quad j=1,\dots,d\}
$$
where $\chi_{[a,b)}(x)$ is a univariate characteristic function of the interval $[a,b)$. 
The classical definition of discrepancy of a set $T$ of points $\{\xi^1,\dots,\xi^m\}\subset [0,1)^d$ is as follows
$$
D(T,m,d)_\infty := \max_{\bb\in [0,1)^d}\left|\prod_{j=1}^db_j -\frac{1}{m}\sum_{\mu=1}^m \chi_{[\mathbf 0,\bb)}(\xi^\mu)\right|.
$$
It is equivalent within multiplicative constants, which may only depend on $d$, to the following definition
\be\label{1.1}
D^1(T):=  \sup_{B\in\cB}\left|vol(B)-\frac{1}{m}\sum_{\mu=1}^m \chi_B(\xi^\mu)\right|,
\ee
where for $B=[\ba,\bb)\in \cB$ we denote $\chi_B(\bx):= \prod_{j=1}^d \chi_{[a_j,b_j)}(x_j)$.   Moreover, we consider the following optimized version of $D^1(T)$
\be\label{1.1'}
D^{1,o}(T):= \inf_{\la_1,\dots,\la_m} \sup_{B\in\cB}\left|vol(B)- \sum_{\mu=1}^m \la_\mu \chi_B(\xi^\mu)\right|.
\ee

In the definition of $D^1(T)$ and $D^{1,o}(T)$ -- the $1$-smooth discrepancy -- we use as a building block the univariate characteristic function.   In numerical integration $L_1$-smoothness of a function plays an important role. A characteristic function of an interval has smoothness $1$ in the $L_1$ norm. This is why we call the corresponding discrepancy characteristics the $1$-smooth discrepancy. In the definition of $D^2(T)$,
$D^{2,o}(T)$, $D^2(T,V)$, and $D^{2,o}(T,V)$ (see below and \cite{VT163}) we use the hat function 
$h_{[-u,u)}(x) =|u-x|$ for $|x|\le u$ and $h_{[-u,u)}(x) =0$ for $|x|\ge u$ instead of the characteristic function $\chi_{[-u/2,u/2)}(x)$. Function $h_{[-u,u)}(x)$ has smoothness $2$ in $L_1$. This fact gives the corresponding name. Note that
$$
h_{[-u,u)}(x) = \chi_{[-u/2,u/2)}(x) \ast \chi_{[-u/2,u/2)}(x),
$$
where
$$
f(x)\ast g(x) := \int_\R f(x-y)g(y)dy.
$$
Now, for $r=1,2,3,\dots$ we inductively define
$$
h^1(x,u):= \chi_{[-u/2,u/2)}(x),\qquad h^2(x,u):= h_{[-u,u)}(x),
$$
$$
h^r(x,u) := h^{r-1}(x,u)\ast h^1(x,u),\qquad r=3,4,\dots.
$$
Then $h^r(x,u)$ has smoothness $r$ in $L_1$ and has support $(-ru/2,ru/2)$. 
Represent a box $B\in\cB$ in the form
$$
B= \prod_{j=1}^d [x^0_j-ru_j/2,x^0_j+ru/2)
$$
and define
$$
h^r_B(\bx):= h^r(\bx,\bx^0,\bu):=\prod_{j=1}^d h^r(x_j-x^0_j,u_j).
$$

In \cite{VT163} we modified definitions (\ref{1.1}) and (\ref{1.1'}), replacing the characteristic function $\chi_B$ by a smoother hat function $h^r_B$.  

The $r$-smooth discrepancy is now defined as
\be\label{1.2}
D^r(T):=  \sup_{B\in\cB}\left|\int h^r_B(\bx)d\bx-\frac{1}{m}\sum_{\mu=1}^m h^r_B(\xi^\mu)\right|
\ee
and its optimized version as 
\be\label{1.2'}
D^{r,o}(T):=  \inf_{\la_1,\dots,\la_m}\sup_{B\in\cB}\left|\int h^r_B(\bx)d\bx- \sum_{\mu=1}^m \la_\mu h^r_B(\xi^\mu)\right|.
\ee
Note that the known concept of $r$-discrepancy   (see, for instance, \cite{TBook}, \cite{T11}, and Section \ref{Disc} below) is close to the above concept of $r$-smooth discrepancy. 

Along with $D^r(T)$ and $D^{r,o}(T)$ we consider a more refined quantity -- {\it $r$-smooth fixed volume discrepancy} -- defined as follows (see \cite{VT163})
\be\label{1.3}
D^r(T,V):=  \sup_{B\in\cB:vol(B)=V}\left|\int h^r_B(\bx)d\bx-\frac{1}{m}\sum_{\mu=1}^m h^r_B(\xi^\mu)\right|;
\ee
\be\label{1.3'}
D^{r,o}(T,V):=  \inf_{\la_1,\dots,\la_m}\sup_{B\in\cB:vol(B)=V}\left|\int h^r_B(\bx)d\bx- \sum_{\mu=1}^m \la_\mu h^r_B(\xi^\mu)\right|.
\ee
Clearly,
$$
D^r(T) = \sup_{V\in(0,1]} D^r(T,V).
$$

In Section \ref{Frol} of this paper we study a periodic analog of the quantities $D^{r,o}(T,V)$ for a set $T$ generated with a help of the Frolov lattice. We first describe 
the periodic analogs of the above discrepancy concepts. For a function $f\in L_1(\R^d)$ with a compact support we define its periodization $\tilde f$ as follows
$$
\tilde f(\bx) := \sum_{\bm\in \Z^d} f(\bm+\bx).
$$
Consider $\bu \in (0,\frac{1}{2}]^d$. Then for all $\bz\in [0,1)^d$ we have 
$$
\supp(h^r(\bx,\bz,\bu)) \subset (-r/4,1+r/4)^d.
$$
Now, for each $\bz\in [0,1)^d$ consider a periodization of function $h^r(\bx,\bz,\bu)$ 
in $\bx$ with period $1$ in each variable  
$\tilde h^r(\bx,\bz,\bu)$.  It is convenient for us to use the following abbreviated notation for the product
$$
pr(\bu):= pr(\bu,d) := \prod_{j=1}^d u_j.
$$
Define the corresponding periodic discrepancy as follows
(we only give one modified definition)
$$
\tilde D^{r,o}(T,v):=
$$
\be\label{1.4}
   \inf_{\la_1,\dots,\la_m}\sup_{\bz\in[0,1)^d;\bu:pr(\bu)=v}\left|\int_{[0,1)^d} \tilde h^r(\bx,\bz,\bu)d\bx- \sum_{\mu=1}^m \la_\mu \tilde h^r(\xi^\mu,\bz,\bu)\right|.
\ee

Second we describe the Frolov cubature formulas. We refer the reader for detailed 
presentation of the theory of the Frolov cubature formulas to \cite{TBook}, \cite{T11}, \cite{MUll}, and \cite{DTU}. The following lemma plays a fundamental role in the
construction of such point sets (see \cite{TBook} for its proof).

\begin{Lemma}\label{L1.1} There exists a matrix $A$ such that the lattice
$L(\mathbf m) = A\mathbf m$
$$
 L(\mathbf m) =
\begin{bmatrix}
L_1(\mathbf m)\\
\vdots\\
L_d(\mathbf m)
\end{bmatrix},
$$
where $\mathbf m$ is a (column)
vector with integer coordinates,
has the following properties

{$1^0$}. $\qquad \left |\prod_{j=1}^d L_j(\mathbf m)\right|\ge 1$
for all $\mathbf m \ne \mathbf 0$;

{$2^0$} each parallelepiped $P$ with volume $|P|$
whose edges are parallel
to the coordinate axes contains no more than $|P| + 1$ lattice
points.
\end{Lemma}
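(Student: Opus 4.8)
The plan is to realize $A$ as the Vandermonde matrix of the conjugates of a totally real algebraic integer, so that Property $1^0$ becomes the elementary fact that a nonzero algebraic integer has field norm of absolute value at least $1$, and then to deduce Property $2^0$ from Property $1^0$ by a one-dimensional sorting argument. First I would fix a monic polynomial $p\in\mathbb{Z}[x]$ of degree $d$ that is irreducible over $\mathbb{Q}$ and has $d$ distinct real roots $\eta_1,\dots,\eta_d$; the existence of such totally real number fields of every degree is classical. Each $\eta_j$ is then an algebraic integer and, by irreducibility, has degree exactly $d$ over $\mathbb{Q}$. I would take $A=(\eta_j^{\,k-1})_{j,k=1}^d$, which is nonsingular since the $\eta_j$ are distinct. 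Writing $Q_{\mathbf m}(x):=\sum_{k=1}^d m_k x^{k-1}$ for $\mathbf m=(m_1,\dots,m_d)$, the coordinates of the lattice point are then $L_j(\mathbf m)=Q_{\mathbf m}(\eta_j)$.

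For Property $1^0$ I would express the product of the coordinates as a field norm. Since $p=\prod_{j=1}^d(x-\eta_j)$ is the minimal polynomial of $\eta_1$, the $d$ embeddings of $K:=\mathbb{Q}(\eta_1)$ into $\mathbb{R}$ act by $\eta_1\mapsto\eta_j$, and because $Q_{\mathbf m}$ has integer coefficients,
$$\prod_{j=1}^d L_j(\mathbf m)=\prod_{j=1}^d Q_{\mathbf m}(\eta_j)=N_{K/\mathbb{Q}}\bigl(Q_{\mathbf m}(\eta_1)\bigr).$$
If $\mathbf m\neq\mathbf 0$, then $Q_{\mathbf m}$ is a nonzero polynomial of degree at most $d-1$, so $Q_{\mathbf m}(\eta_1)\neq0$ because $\eta_1$ has degree $d$; it is also an algebraic integer. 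Hence its norm is a nonzero rational integer and $\left|\prod_{j=1}^d L_j(\mathbf m)\right|\ge1$, which is exactly Property $1^0$.

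For Property $2^0$ I would argue directly from $1^0$. First observe that $1^0$ forces every nonzero lattice vector to have all of its coordinates nonzero. Let $P=\prod_{j=1}^d[a_j,a_j+h_j)$ with $|P|=\prod_{j=1}^d h_j$, and let $\mathbf y^1,\dots,\mathbf y^N$ be the lattice points contained in $P$. Differences of distinct lattice points are nonzero lattice vectors, so the first coordinates $y^i_1$ are pairwise distinct and I may order the points so that $y^1_1<\cdots<y^N_1$. For each consecutive difference $\mathbf z^i=\mathbf y^{i+1}-\mathbf y^i$ one has $|z^i_j|<h_j$ for every $j$, while $1^0$ gives $z^i_1\prod_{j=2}^d|z^i_j|\ge1$, hence $z^i_1>\bigl(\prod_{j=2}^d h_j\bigr)^{-1}$. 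Summing the gaps,
$$h_1>\sum_{i=1}^{N-1}z^i_1>(N-1)\Bigl(\prod_{j=2}^d h_j\Bigr)^{-1},$$
so $N-1<\prod_{j=1}^d h_j=|P|$ and therefore $N\le|P|+1$.

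Both steps are short once the setup is in place: $1^0$ is pure algebraic number theory and $2^0$ is the elementary count above. I expect the only real external input to be the classical existence of a degree-$d$ totally real irreducible monic integer polynomial, and the one computation requiring care is the norm identity $\prod_{j=1}^d L_j(\mathbf m)=N_{K/\mathbb{Q}}(Q_{\mathbf m}(\eta_1))$ together with the verification that $Q_{\mathbf m}(\eta_1)$ is a genuinely nonzero algebraic integer, which is what makes its norm a nonzero rational integer.
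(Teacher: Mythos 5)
Your proposal is correct and is essentially the classical Frolov construction that the paper's cited reference (\cite{TBook}) uses: the Vandermonde matrix of the conjugates of a totally real algebraic integer of degree $d$, with property $1^0$ following from the fact that a nonzero algebraic integer has norm of absolute value at least $1$, and property $2^0$ deduced from $1^0$ by exactly the same sort-the-first-coordinates gap-counting argument. Both the norm identity and the counting step are carried out correctly (including the observation that $1^0$ forces all coordinates of a nonzero lattice vector to be nonzero), so there is nothing to fix.
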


Let $a > 1$ and $A$ be the matrix from Lemma \ref{L1.1}. We consider the
cubature formula
$$
\Phi(a,A)(f) := \bigl(a^d |\det A|\bigr)^{-1}\sum_{\mathbf m\in\Z^d}f
\left (\frac{(A^{-1})^T\mathbf m}{a}\right)
$$
for $f$ with compact support.   

We call the {\it Frolov point set} the following set associated with the matrix $A$ and parameter $a$
$$
\cF(a,A) := \left\{\left (\frac{(A^{-1})^T\mathbf m}{a}\right)\right\}_{\bm\in\Z^d}\cap [0,1)^d =: \{\bz^\mu\}_{\mu=1}^N.
$$
 Clearly, the number $N=|\cF(a,A)|$ of points of this
set does not exceed $C(A)a^d$. 
The following results were obtained in \cite{VT163}.

\begin{Theorem}\label{T1.1} Let $r\ge 2$. There exists a constant $c(d,A,r)>0$ such that for any $V\ge V_0:= c(d,A,r)a^{-d}$ we have for all $B\in\cB$, $vol(B)=V$,
\be\label{1.5}
|\Phi(a,A)(h^r_B) - \hat h^r_B(\mathbf 0)| \le C(d,A,r)a^{-rd} (\log(2V/V_0))^{d-1}.
\ee
\end{Theorem}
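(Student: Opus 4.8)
The plan is to convert the cubature error into a sum of the Fourier transform of $h^r_B$ over the dual lattice, and then to estimate that lattice sum by exploiting the two geometric properties of the Frolov lattice recorded in Lemma \ref{L1.1}. I would begin with the Poisson summation formula. Writing the nodes of $\Phi(a,A)$ as $\La\bm$ with $\La=a^{-1}(A^T)^{-1}$, and using $|\det\La|=(a^d|\det A|)^{-1}$, Poisson summation turns the normalized node sum into a sum of $\hat h^r_B$ over the dual lattice $(\La^{-1})^T\Z^d=aA\Z^d$:
$$\Phi(a,A)(h^r_B)=\sum_{\bn\in\Z^d}\hat h^r_B(aA\bn).$$
Since the $\bn=\mathbf{0}$ term is exactly $\hat h^r_B(\mathbf{0})=\int h^r_B$, the error equals $\sum_{\bn\ne\mathbf{0}}\hat h^r_B(aA\bn)$. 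For $r\ge2$ the function $h^r_B$ is continuous with compact support and its transform decays fast enough (see below) for both sides to converge absolutely, which is where the hypothesis $r\ge2$ enters. The $j$-th coordinate of a dual node is $aL_j(\bn)$, so property $1^0$ gives $|\prod_j(aA\bn)_j|\ge a^d$ for every $\bn\ne\mathbf{0}$, while property $2^0$ (rescaled) controls how many nodes lie in a given axis-parallel box.

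Next I would compute the transform explicitly. Because $h^r(\cdot,u)$ is an $r$-fold convolution of $\chi_{[-u/2,u/2)}$, its transform is $(\sin(\pi uy)/(\pi y))^r$, and the product structure of $h^r_B$ together with $|\sin(\pi t)/(\pi t)|\le\min(1,|t|^{-1})$ gives
$$|\hat h^r_B(\by)|=\prod_{j=1}^d\left|\frac{\sin(\pi u_j y_j)}{\pi y_j}\right|^r\le\Big(\prod_{j=1}^d u_j\Big)^r\prod_{j=1}^d\min(1,|u_jy_j|^{-r}).$$
Since the support of $h^r_B$ is the box $B$ with side lengths $ru_j$, we have $vol(B)=r^d\prod_j u_j=V$, hence $\prod_j u_j=V/r^d$. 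Putting $t_j:=u_jy_j$ with $\by=aA\bn$ sends the dual lattice to $\La':=D_u\,aA\Z^d$ where $D_u=\operatorname{diag}(u_1,\dots,u_d)$, and reduces the problem to bounding $S:=\sum_{t\in\La'\setminus\{\mathbf{0}\}}\prod_{j=1}^d\min(1,|t_j|^{-r})$. Under this rescaling property $1^0$ becomes $|\prod_j t_j|\ge\rho$ for all nonzero $t\in\La'$, where $\rho:=(V/r^d)a^d$, while property $2^0$ says that an axis-parallel box of volume $|Q|$ contains at most $|Q|/\rho+1$ points of $\La'$.

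The core of the argument is the estimate of $S$ by a dyadic decomposition, and this is the step I expect to be the main obstacle. For $\bs\in\Z_{\ge0}^d$ let the shell $\bs$ collect the $t$ with $|t_j|\le1$ when $s_j=0$ and $|t_j|\in(2^{s_j-1},2^{s_j}]$ when $s_j\ge1$; on this shell each factor is at most $2^r2^{-rs_j}$, so $\prod_j\min(1,|t_j|^{-r})\le2^{rd}2^{-r|\bs|}$ with $|\bs|=s_1+\dots+s_d$. Property $1^0$ is what produces the improvement: any nonzero $t$ in shell $\bs$ has $|\prod_j t_j|\le2^{|\bs|}$, so the shell is empty unless $2^{|\bs|}\ge\rho$, i.e. $|\bs|\ge\log_2\rho=:n_0$. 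Property $2^0$ bounds the number of nonzero nodes in shell $\bs$ by $2^d2^{|\bs|}/\rho+1$. Since the number of $\bs$ with $|\bs|=n$ is of order $n^{d-1}$, summing the per-shell contributions over $n\ge n_0$ yields series in $2^{(1-r)n}$ and in $2^{-rn}$; as $r\ge2$ both converge and are dominated by the smallest admissible scale $n\approx n_0$, giving
$$S\le C(d,A,r)\,\rho^{-r}\,(1+\log\rho)^{d-1}.$$
The delicate points are the simultaneous use of both lattice properties (using only property $2^0$ loses a full factor $\rho$) and verifying that each geometric series is governed by its first nonzero term.

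Finally I would assemble the pieces. Multiplying the bound for $S$ by the prefactor $(\prod_j u_j)^r=(V/r^d)^r$ and using $\rho=(V/r^d)a^d$ gives $(V/r^d)^r\rho^{-r}=a^{-rd}$, so
$$\Big|\Phi(a,A)(h^r_B)-\hat h^r_B(\mathbf{0})\Big|\le C(d,A,r)\,a^{-rd}\,(1+\log\rho)^{d-1}.$$
It remains to match the logarithm: with $V_0=c(d,A,r)a^{-d}$ one has $\rho=(c/r^d)(V/V_0)$, so choosing the constant $c(d,A,r)$ large enough (of order at least $r^d$) forces $\rho\ge1$, which both guarantees that the central shell $\bs=\mathbf{0}$ contains no nonzero node and yields $1+\log\rho\le C\log(2V/V_0)$ throughout the range $V\ge V_0$. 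This is precisely the role played by the threshold $V_0$, and substituting gives the claimed bound $(\ref{1.5})$.
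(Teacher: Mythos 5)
Your proposal is correct and follows essentially the same route as the paper's own argument (Section \ref{Frol}, which with $w\equiv 1$ is exactly the proof of Theorem \ref{T1.1} from \cite{VT163}): the error identity $\Phi(a,A)(f)-\hat f(\mathbf 0)=\sum_{\bm\ne\mathbf 0}\hat f(aA\bm)$, the transform bound $|\hat h^r_B(\by)|\le C\prod_{j}\min\left(u_j^r,|y_j|^{-r}\right)$, and a dyadic decomposition of the dual lattice that uses property $1^0$ to kill the shells below $2^{n_0}\approx a^d\,pr(\bu)$ and property $2^0$ to count points in the remaining shells. Your rescaling $t_j=u_jy_j$ with direct shell counting is a cosmetic reorganization of the paper's splitting $\min\left(u^r,|y|^{-r}\right)=(u/|y|)^{r/2}\min\left(|yu|^{r/2},|yu|^{-r/2}\right)$ combined with Lemma \ref{L2.2}, and your explicit sinc formula for $\hat h^r_B$ replaces the smoothness-based Lemma \ref{L2.1}, which the paper only needs because of the window $w$ in the periodic case.
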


\begin{Corollary}\label{C1.1} For $r\ge2$ there exists a constant $c(d,A,r)>0$ such that for any $V\ge V_0:= c(d,A,r)a^{-d}$ we have
\be\label{1.6}
D^{r,o}(\cF(a,A),V) \le C(d,A,r)a^{-rd} (\log(2V/V_0))^{d-1}.
\ee
\end{Corollary}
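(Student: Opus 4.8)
The plan is to realize the infimum defining $D^{r,o}(\cF(a,A),V)$ at a single explicit weight vector coming from the Frolov cubature formula, and then quote Theorem \ref{T1.1} verbatim. Concretely, I would set $\la_\mu := (a^d|\det A|)^{-1}$ for every $\mu=1,\dots,N$, so that the weighted sum $\sum_{\mu=1}^N \la_\mu h^r_B(\bz^\mu)$ becomes, up to the reduction verified below, exactly the cubature value $\Phi(a,A)(h^r_B)$. Since $D^{r,o}(\cF(a,A),V)$ is an infimum over all weight vectors, evaluating the inner supremum at this particular choice can only overestimate it; hence it suffices to bound
\[
\sup_{B\in\cB:\,vol(B)=V}\left|\int h^r_B(\bx)\,d\bx - \Phi(a,A)(h^r_B)\right|.
\]

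The one point that must be checked is that the full-lattice cubature sum, which a priori ranges over all $\bm\in\Z^d$, collapses to a sum over the finitely many points $\{\bz^\mu\}_{\mu=1}^N$ of $\cF(a,A)$. For this I would use that, for $B\in\cB$, the hat product $h^r_B$ is supported exactly on $B$, and that by the definition of $\cB$ the box $B=[\ba,\bb)$ satisfies $\ba,\bb\in[0,1)^d$, so $\supp(h^r_B)=B\subset[0,1)^d$; in fact the coordinatewise supports are the open intervals $(a_j,b_j)\subset(0,1)$, so no boundary point is in play. Consequently every Frolov lattice point $(A^{-1})^T\bm/a$ at which $h^r_B$ is nonzero lies in $B\subset[0,1)^d$ and therefore belongs to $\cF(a,A)$, while the remaining lattice points fall outside the support and contribute zero. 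This yields the identity $\Phi(a,A)(h^r_B) = (a^d|\det A|)^{-1}\sum_{\mu=1}^N h^r_B(\bz^\mu) = \sum_{\mu=1}^N \la_\mu h^r_B(\bz^\mu)$, valid uniformly in $B$.

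With this identity and the elementary fact that $\int h^r_B(\bx)\,d\bx = \hat h^r_B(\mathbf 0)$, the inner quantity is precisely $|\Phi(a,A)(h^r_B)-\hat h^r_B(\mathbf 0)|$. Theorem \ref{T1.1} bounds this by $C(d,A,r)a^{-rd}(\log(2V/V_0))^{d-1}$ for every $B\in\cB$ with $vol(B)=V$, provided $V\ge V_0=c(d,A,r)a^{-d}$, and crucially the bound is uniform in $B$. Taking the supremum over such $B$ and then the infimum over weights produces (\ref{1.6}). The only genuine obstacle is the support-containment bookkeeping of the second paragraph; once that reduction is in place the corollary is an immediate specialization of Theorem \ref{T1.1}, which is exactly why it is stated as a corollary.
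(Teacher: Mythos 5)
Your proposal is correct and is exactly the intended derivation: the paper (quoting \cite{VT163}) states Corollary \ref{C1.1} as an immediate consequence of Theorem \ref{T1.1}, obtained by choosing the constant Frolov weights $\la_\mu = (a^d|\det A|)^{-1}$ in the infimum defining $D^{r,o}(\cF(a,A),V)$ and observing that, since $h^r_B$ vanishes outside $B\subset[0,1)^d$, the full lattice sum $\Phi(a,A)(h^r_B)$ reduces to a sum over the points of $\cF(a,A)$. Your support-containment bookkeeping (including the remark that for $r\ge 2$ the factors vanish on the boundary) is precisely the one point that needs checking, and you handled it correctly.
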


In Section \ref{Frol} we extend Theorem \ref{T1.1} and Corollary \ref{C1.1} to the periodic case. For that we need to modify the set $\cF(a,A)$ and the cubature formula 
$\Phi(a,A)$. For $\by\in \R^d$ denote $\{\by\} := (\{y_1\},\dots,\{y_d\})$, where for $y\in \R$ notation $\{y\}$ means the fractional part of $y$. For given $a$ and $A$ denote
$$
\eta:= \{\eta^\mu\}_{\mu=1}^m := \left\{\left (\frac{(A^{-1})^T\mathbf m}{a}\right)\right\}_{\bm\in\Z^d}\cap [-1/2,3/2)^d
$$
and
\be\label{1.6'}
\xi:=\{\xi^\mu\}_{\mu=1}^m := \left\{\{\eta^\mu\}\right\}_{\mu=1}^m.
\ee
Clearly, $m\le C(A)a^d$. Next, let $w(t)$ be infinitely differentiable on $\R$ function 
with the following properties
\be\label{1.7}
\supp(w) \subset (-1/2,3/2)\quad \text{and}\quad \sum_{k\in\Z} w(t+k) =1.
\ee
Denote $w(\bx):= \prod_{j=1}^d w(x_j)$. Then for $f(\bx)$ defined on $[0,1)^d$ we 
consider the cubature formula
$$
\Phi(a,A,w)(f) := \sum_{\mu=1}^m w_\mu f(\xi^\mu),\qquad w_\mu := w(\eta^\mu) .
$$
 In Section \ref{Frol} we prove the following analogs of Theorem \ref{T1.1} and Corollary \ref{C1.1}.
 
 \begin{Theorem}\label{T1.2} Let $r\ge 2$. There exists a constant $c(d,A,r)>0$ such that for any $v\ge v_0:= c(d,A,r)a^{-d}$ we have for all $\bu\in (0,1/2]^d$, $pr(\bu)=v$, and $\bz\in[0,1)^d$
$$
|\Phi(a,A,w)(\tilde h^r(\cdot,\bz,\bu) - \hat{\tilde h}^r(\mathbf 0,\bz,\bu)| \le C(d,A,r,w)a^{-rd} (\log(2v/v_0))^{d-1}.
$$
\end{Theorem}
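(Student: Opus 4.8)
The plan is to reduce the periodic statement to the non-periodic Theorem \ref{T1.1} by exploiting the compact support of $w$, and then to run a Fourier-analytic estimate parallel to the one behind that theorem. Set
$$g(\bx):= w(\bx)\,\tilde h^r(\bx,\bz,\bu).$$
Since $\supp(w)\subset(-1/2,3/2)^d$, the periodic factor is cut off and $g$ has compact support, so $g$ is a legitimate integrand for $\Phi(a,A)$. The first thing I would verify is the exact identity $\Phi(a,A,w)(\tilde h^r(\cdot,\bz,\bu))=\Phi(a,A)(g)$. Indeed $\tilde h^r$ is $\Z^d$-periodic, so $\tilde h^r(\xi^\mu,\bz,\bu)=\tilde h^r(\eta^\mu,\bz,\bu)$ because $\xi^\mu=\{\eta^\mu\}$ differs from $\eta^\mu$ by an integer vector; the $\eta^\mu$ are precisely the points of the lattice $a^{-1}(A^{-1})^T\Z^d$ lying in the cube where $w$ is supported, and the weights $w_\mu$ (normalized by the factor $(a^d|\det A|)^{-1}$ carried by $\Phi(a,A)$) then match the two cubature sums term by term, boundary points contributing nothing since $w$ vanishes there. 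Dually, using $\sum_{\bk\in\Z^d}w(\bx+\bk)=1$ together with periodicity, I would check
$$\hat{\tilde h}^r(\mathbf 0,\bz,\bu)=\int_{[0,1)^d}\tilde h^r(\bx,\bz,\bu)\,d\bx=\int_{\R^d}g(\bx)\,d\bx=\hat g(\mathbf 0).$$
Subtracting gives $\Phi(a,A,w)(\tilde h^r)-\hat{\tilde h}^r(\mathbf 0)=\Phi(a,A)(g)-\hat g(\mathbf 0)$.

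As in the proof of Theorem \ref{T1.1}, Poisson summation for the Frolov lattice yields $\Phi(a,A)(g)-\hat g(\mathbf 0)=\sum_{\bn\neq\mathbf 0}\hat g(aA\bn)$, the sum running over the dual lattice $aA\Z^d$. Because $\tilde h^r$ is periodic, its transform lives on $\Z^d$ with weights $\hat h^r(\bk,\bz,\bu)$, so
$$\hat g(\xi)=\sum_{\bk\in\Z^d}\hat h^r(\bk,\bz,\bu)\,\hat w(\xi-\bk).$$
The partition-of-unity hypothesis (\ref{1.7}) is equivalent to $\hat w(\bk)=\delta_{\bk,\mathbf 0}$ for $\bk\in\Z^d$, and this is exactly what makes the $\bn=\mathbf 0$ term reproduce $\hat g(\mathbf 0)=\hat{\tilde h}^r(\mathbf 0)$, consistently with the previous paragraph.

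It then remains to bound $\sum_{\bn\neq\mathbf 0}|\hat g(aA\bn)|$ by $C(d,A,r,w)\,a^{-rd}(\log(2v/v_0))^{d-1}$. Writing $Y:=aA\bn$ and using that $\hat w$ is a product of rapidly decaying factors, $|\hat w(\zeta)|\le C_N\prod_j(1+|\zeta_j|)^{-N}$, together with $|\hat h^r(k,u)|=|\sin(\pi uk)/(\pi k)|^r\le\min(u,1/(\pi|k|))^r$, I would factor over coordinates:
$$|\hat g(Y)|\le C_N\prod_{j=1}^d\Big(\sum_{k\in\Z}\min(u_j,\tfrac{1}{\pi|k|})^r(1+|Y_j-k|)^{-N}\Big).$$
Each coordinate sum is a convolution of the slowly varying majorant of $|\hat h^r(\cdot,u_j)|$ against a rapidly decaying kernel, hence is controlled by $|\hat h^r(Y_j,u_j)|$ plus a correction of order $u_j^r(1+|Y_j|)^{-(N-r)}$. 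Thus $\prod_j(\text{coordinate sum})$ is dominated by $\prod_j|\hat h^r(Y_j,u_j)|$ plus cross terms that decay faster in at least one coordinate. Summing the main product over $\{aA\bn:\bn\neq\mathbf 0\}$ is exactly the estimate already performed for Theorem \ref{T1.1}: property $1^0$ of Lemma \ref{L1.1} gives $\prod_j|(A\bn)_j|\ge1$, so $\prod_j|Y_j|\ge a^d$ and the dual lattice avoids the coordinate hyperplanes, while property $2^0$ counts the lattice points in axis-parallel boxes; together they produce $C\,a^{-rd}(\log(2v/v_0))^{d-1}$, the logarithmic power counting the dyadic scales compatible with $pr(\bu)=v\ge v_0$. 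Choosing $N=N(r,d)$ large makes the corrections summable and of lower order, absorbing them into the constant.

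The delicate point is this last step. Multiplying by $w$ convolves the clean product transform $\hat h^r_B$ with $\hat w$, so the decay and cancellation exploited in Theorem \ref{T1.1} must be shown to survive the smearing. The two features that rescue the argument are the rapid decay of $\hat w$ and its vanishing at the nonzero integers; the real work is to verify that the resulting coordinate sums are genuinely dominated by $|\hat h^r(Y_j,u_j)|$ uniformly over the dual lattice, without eroding the $(\log(2v/v_0))^{d-1}$ budget, so that the periodic error is controlled by the non-periodic one.
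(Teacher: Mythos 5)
Your proposal is correct, and it reaches the theorem by a genuinely different route at the one place where real work is needed. The reduction is the same as the paper's: your identities $\Phi(a,A,w)(\tilde h^r(\cdot,\bz,\bu))=\Phi(a,A)(w\tilde h^r)$ and $\hat{\tilde h}^r(\mathbf 0,\bz,\bu)=\widehat{(w\tilde h^r)}(\mathbf 0)$ are exactly (\ref{2.2}) and (\ref{2.1}), and your dual-lattice expansion is (\ref{2.5}). The divergence is in proving the key decay bound (\ref{2.6}), $|\hat g(\by)|\le C\prod_{j=1}^d\min\left(u_j^r,|y_j|^{-r}\right)$. The paper works in physical space: it reduces to the non-periodized product $f=wh^r$ (the periodization restricted to $\supp(w)$ is a finite sum of shifted such products), checks that each univariate factor $w(x)h^r(x,z,u)$ has support of length $O(u)$, sup norm $O(u^{r-1})$ and $\|\Delta^r_t f\|_1\le C|t|^r$, and invokes the iterated-difference Lemma \ref{L2.1}. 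You work in frequency space, writing $\hat g(\by)=\sum_{\bk\in\Z^d}\hat h^r(\bk,\bz,\bu)\,\hat w(\by-\bk)$ and showing that discrete convolution with the rapidly decaying $\hat w$ preserves the majorant. That step, which you rightly flag as the delicate one, does go through by splitting each coordinate sum at $|k|\le |Y_j|/2$: the range $|k|\ge |Y_j|/2$ contributes at most a constant times $\min(u_j^r,|Y_j|^{-r})$ since the kernel sums to $O(1)$, while the near range contributes at most a constant times $u_j^r\,|Y_j|\,(1+|Y_j|/2)^{-N}$, which is absorbed into the majorant once $N\ge r+1$. One imprecision to fix: the coordinate sums are not dominated by $|\hat h^r(Y_j,u_j)|$ literally, since that quantity vanishes whenever $u_jY_j\in\Z$; what you actually prove, and all that the lattice summation needs, is domination by $\min(u_j^r,|Y_j|^{-r})$. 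After (\ref{2.6}) the two arguments coincide: dyadic blocks $\rho(\bs)$, the counting properties $1^0$ and $2^0$ of Lemma \ref{L1.1}, and Lemma \ref{L2.2} give $C(d,A,r,w)a^{-rd}(\log(2v/v_0))^{d-1}$. As for what each approach buys: the paper's route needs no convolution estimate and reuses Lemma \ref{L2.1} verbatim from the non-periodic case; yours avoids decomposing the periodization into shifts, works for any cutoff given only decay of $\hat w$, and isolates exactly where the partition-of-unity hypothesis (\ref{1.7}) enters, namely $\hat w(\bk)=\delta_{\bk,\mathbf 0}$ on $\Z^d$, which is what makes the $\bn=\mathbf 0$ term consistent with the target integral.
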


\begin{Corollary}\label{C1.2} For $r\ge2$ there exists a constant $c(d,A,r)>0$ such that for any $v\ge v_0:= c(d,A,r)a^{-d}$ we have for the point set $\xi$ defined by (\ref{1.6'})
$$
\tilde D^{r,o}(\xi,v) \le C(d,A,r)a^{-rd} (\log(2v/v_0))^{d-1}.
$$
\end{Corollary}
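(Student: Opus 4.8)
The plan is to derive Corollary \ref{C1.2} directly from Theorem \ref{T1.2} by making a single admissible choice of the weights in the infimum that defines $\tilde D^{r,o}(\xi,v)$. The guiding observation is that the quantity controlled in Theorem \ref{T1.2} is precisely the integration error of the cubature formula $\Phi(a,A,w)$, and that this cubature formula is itself one of the competitors in the infimum in (\ref{1.4}).

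First I would record the elementary fact that for a $1$-periodic function the integral over the unit cube equals its zeroth Fourier coefficient, so that
$$
\int_{[0,1)^d}\tilde h^r(\bx,\bz,\bu)\,d\bx = \hat{\tilde h}^r(\mathbf 0,\bz,\bu)
$$
for every $\bz\in[0,1)^d$ and every $\bu\in(0,1/2]^d$. This identifies the mean term in the definition (\ref{1.4}) with the mean term appearing in the statement of Theorem \ref{T1.2}.

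Next I would choose, among the competitors $\la_1,\dots,\la_m$ in (\ref{1.4}), the specific weights $\la_\mu := w_\mu = w(\eta^\mu)$ coming from the cubature formula $\Phi(a,A,w)$. This is a legitimate choice, since $\tilde D^{r,o}(\xi,v)$ and $\Phi(a,A,w)$ are built on the very same point set $\{\xi^\mu\}_{\mu=1}^m$ defined in (\ref{1.6'}). With this choice one has
$$
\sum_{\mu=1}^m \la_\mu \tilde h^r(\xi^\mu,\bz,\bu) = \Phi(a,A,w)(\tilde h^r(\cdot,\bz,\bu)),
$$
so that the expression inside the supremum in (\ref{1.4}) becomes exactly
$$
\left|\hat{\tilde h}^r(\mathbf 0,\bz,\bu) - \Phi(a,A,w)(\tilde h^r(\cdot,\bz,\bu))\right|.
$$
Taking the supremum over $\bz\in[0,1)^d$ and $\bu$ with $pr(\bu)=v$, and using that the infimum in (\ref{1.4}) does not exceed its value at this particular choice of weights, Theorem \ref{T1.2} immediately delivers the claimed bound $C(d,A,r)a^{-rd}(\log(2v/v_0))^{d-1}$.

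Since the entire argument reduces to substituting the cubature weights into the infimum and invoking Theorem \ref{T1.2}, I expect essentially no obstacle at this stage. The only points requiring (routine) verification are the identification of the integral with the zeroth Fourier coefficient and the admissibility of the $w_\mu$ as a choice of $\la_\mu$; all of the genuine analytic difficulty has already been absorbed into the proof of Theorem \ref{T1.2} itself.
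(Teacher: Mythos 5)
Your proposal is correct and is exactly the intended derivation: the paper treats Corollary \ref{C1.2} as an immediate consequence of Theorem \ref{T1.2}, obtained by taking the cubature weights $\la_\mu = w_\mu = w(\eta^\mu)$ as the particular competitor in the infimum in (\ref{1.4}) and identifying $\int_{[0,1)^d}\tilde h^r(\bx,\bz,\bu)\,d\bx$ with $\hat{\tilde h}^r(\mathbf 0,\bz,\bu)$. The only cosmetic point you leave implicit is that the constant $C(d,A,r,w)$ of Theorem \ref{T1.2} becomes $C(d,A,r)$ in the corollary simply by fixing once and for all a function $w$ satisfying (\ref{1.7}).
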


In particular, Theorem \ref{T1.2} implies that the $r$-smooth periodic discrepancy
$$
\tilde D^{r,o}_m :=
$$
\be\label{1.4'}
   \inf_{\la_1,\dots,\la_m}\sup_{\bz\in[0,1)^d;\bu\in(0,1/2]^d}\left|\int_{[0,1)^d} \tilde h^r(\bx,\bz,\bu)d\bx- \sum_{\mu=1}^m \la_\mu \tilde h^r(\xi^\mu,\bz,\bu)\right|
\ee
satisfies the bound (for $r\in\N$, $r\ge 2$)
\be\label{1.13}
\tilde D^{r,o}_m   \le C(d,r)m^{-r} (\log m)^{d-1}.
\ee
In Section \ref{lb} we show that the bound (\ref{1.13}) cannot be improved for a natural 
class of weights $\la_1,\dots,\la_m$ used in the optimization procedure in the definition 
of $\tilde D^{r,o}_m$, namely, for weights, satisfying
$$
\sum_{\mu=1}^m |\la_\mu| \le B.
$$

\section{Point sets based on the Frolov lattice}
\label{Frol}
We prove Theorem \ref{T1.2} in this section.
Let $f(\bx)$ be $1$-periodic in each variable function integrable on $\Omega_d:= [0,1)^d$. Then function $w(\bx)f(\bx)$ is integrable on $\R^d$ and has a finite support:
 $\supp(wf)\subset (-1/2 ,3/2)^d$. We note that the idea of applying the Frolov cubature 
 formulas to the product of the form $w(\bx)f(\bx)$, where one function is very smooth and takes care of the support of the product (in our case it is $w(\bx)$) and the other function has a prescribed decay of its Fourier coefficients (in our case it is $f(\bx) = h^r(\bx,\bz,\bu)$), goes back to the very first paper \cite{Fro1} on the Frolov cubature formulas. Further detailed development of this idea was made in \cite{NUU}.
 
 Property (\ref{1.7}) implies
$$
\int_{\R^d}w(\bx)f(\bx)d\bx = \sum_{\bk\in\Z^d}\int_{\Omega_d} w(\bk+\bx)f(\bk+\bx)d\bx= \sum_{\bk\in\Z^d}\int_{\Omega_d} w(\bk+\bx)f(\bx)d\bx 
$$
\be\label{2.1}
  =\int_{\Omega_d}\left(\sum_{\bk\in\Z^d}w(\bk+\bx)\right)f(\bx)d\bx =\int_{\Omega_d}f(\bx)d\bx.
\ee
Next, using periodicity of $f$ we write
\be\label{2.2}
\Phi(a,A,w)(f) := \sum_{\mu=1}^m w_\mu f(\xi^\mu) = \sum_{\mu=1}^m w(\eta^\mu) f(\eta^\mu)=\Phi(a,A)(wf).
\ee
Thus, for a $1$-periodic function $f$ we have
\be\label{2.3}
\int_{\Omega_d}f(\bx)d\bx - \Phi(a,A,w)(f) = \int_{\R^d}w(\bx)f(\bx)d\bx - \Phi(a,A)(wf).
\ee
We use identity (\ref{2.3}) for $f(\bx) = \tilde h^r(\bx,\bz,\bu)$ and estimate the right hand side of (\ref{2.3}). It is clear that it is sufficient to estimate 
\be\label{2.4}
\int_{\R^d}w(\bx)h^r(\bx,\bz,\bu)d\bx - \Phi(a,A)(wh^r).
\ee
In the case $w(\bx)=1$ the above error is bounded in \cite{VT163}. We follow a similar way and use some technical lemmas from \cite{VT163}. Denote for $f\in L_1 (\R^d)$
$$
\cF(f)(\by):=\hat f(\mathbf y) := \int_{\R^d} f(\mathbf x)e^{-2\pi i(\mathbf y,\mathbf x)}d\mathbf x.
$$
For a function $f$ with finite support and absolutely convergent 
series \newline $\sum_{\mathbf m\in\Z^d}\hat f(aA\mathbf m)$ we have for the error of the Frolov  cubature formula (see \cite{TBook})
\be\label{2.5}
 \Phi(a,A)(f) -\hat f(\mathbf 0) =\sum_{\mathbf m\ne\mathbf 0}
\hat f(aA\mathbf m).
\ee
We begin with the following simple univariate lemma. 
\begin{Lemma}\label{L2.1} Suppose that $r\in\N$ and $f\in L_1(\R)$ satisfies the following conditions
$$
|\supp(f)|\le C_1u, \quad |f(x)| \le C_2u^{r-1},\quad \|\Delta^{r}_t f\|_1\le C_3|t|^r,
$$
where $\Delta_t f(x):= f(x)-f(x+t)$, $\Delta^r_t:= (\Delta_t)^r$. 
Then,
$$
|\hat f(y)| \le C_4\min\left(u^r,\frac{1}{|y|^r}\right).
$$
\end{Lemma}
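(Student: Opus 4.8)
The plan is to establish the two competing bounds separately and then combine them through the minimum. For the bound $|\hat f(y)| \le C_4 u^r$, which must hold for every $y$ (and in particular covers the case $y=0$, where $1/|y|^r$ is read as $+\infty$), I would use only the crude estimate $|\hat f(y)| \le \|f\|_1$ together with the first two hypotheses: bounding $|f|$ by its supremum $C_2 u^{r-1}$ on its support and using $|\supp(f)| \le C_1 u$ gives $\|f\|_1 \le |\supp(f)|\,\|f\|_\infty \le C_1 C_2 u^r$, whence $|\hat f(y)| \le C_1 C_2 u^r$.

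For the decay bound $|\hat f(y)| \le C_4 |y|^{-r}$, needed only for $y \ne 0$, the key observation is that the finite difference operator $\Delta_t$ acts as a Fourier multiplier. Computing the Fourier transform of the translate $f(\cdot+t)$ gives $\widehat{\Delta_t f}(y) = (1-e^{2\pi i t y})\hat f(y)$, and iterating $r$ times yields
\[
\widehat{\Delta^r_t f}(y) = (1-e^{2\pi i t y})^r \hat f(y).
\]
Taking absolute values and using $|\widehat{\Delta^r_t f}(y)| \le \|\Delta^r_t f\|_1 \le C_3 |t|^r$ together with $|1-e^{2\pi i t y}| = 2|\sin(\pi t y)|$, I obtain, for every $t$,
\[
2^r |\sin(\pi t y)|^r\,|\hat f(y)| \le C_3 |t|^r.
\]
The freedom in $t$ is what drives the argument: choosing $t = 1/(2y)$ makes $|\sin(\pi t y)| = 1$ while $|t| = 1/(2|y|)$, so the inequality collapses to $|\hat f(y)| \le C_3/(4^r |y|^r)$.

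Combining the two estimates with $C_4 := \max(C_1 C_2,\, 4^{-r} C_3)$ then yields $|\hat f(y)| \le C_4 \min(u^r, |y|^{-r})$, as claimed. I do not expect a serious obstacle here; the only point demanding a little care is justifying the multiplier identity for $\Delta^r_t$ in the stated generality, that is, checking that the hypotheses guarantee $\Delta^r_t f \in L_1$ so that its Fourier transform equals the pointwise-bounded function above — but this is immediate, since $f \in L_1$ forces every translate, and hence every finite difference, to lie in $L_1$. The genuinely substantive feature is recognizing that the three hypotheses are calibrated precisely so that the trivial $L_1$ bound controls small $|y|$ while the difference bound controls large $|y|$, with the crossover occurring at $|y| \sim 1/u$.
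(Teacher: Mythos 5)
Your proof is correct and follows essentially the same route as the paper: the trivial bound $\|f\|_1 \le C_1C_2u^r$ handles the $u^r$ part, while the finite-difference multiplier identity with the choice $t=1/(2y)$ (which is exactly the paper's identity $\hat f(y)=\mathcal F\bigl(\tfrac1{2}\Delta_{1/(2y)}f\bigr)(y)$, iterated $r$ times) gives the $|y|^{-r}$ decay. Your version merely states the multiplier identity for general $t$ before specializing, which is a cosmetic difference.
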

\begin{proof} It is easy to see that
$$
{\mathcal F}(f)(y):=\hat f(y)   ={\mathcal F}\left(\frac 12
\Delta_{\frac{1}{2y}}f \right) (y).
$$
Iterating the above identity $r$ times we obtain
$$
{\mathcal F}(f)(y) = {\mathcal F}\left(\left( \frac{1}{2^{r}}
\Delta_{\frac{1}{2y}}^{r}\right)f\right)(y).
$$
Using the above representation and our assumptions on $f$, we get
$$
\bigl|{\mathcal F}(f)(\mathbf y)\bigr|\le C_4\min\left(u^r,\frac{1}{|y|^r}\right).
$$

The lemma is proved.

\end{proof}

We return back to estimation of (\ref{2.4}). We have
$$
w(\bx)h^r(\bx,\bz,\bu)=\prod_{j=1}^d w(x_j)h^r(x_j,z_j,u_j).
$$
It is easy to check that $f(x):= w(x)h^r(x,z,u)$ satisfies conditions of Lemma \ref{L2.1}.
Therefore, for $f(\bx):= w(\bx)h^r(\bx,\bz,\bu)$ by Lemma \ref{L2.1} we have 
$$
|\hat f(\by)| \le C(d,r,w)\prod_{j=1}^d\min\left(u_j^r,\frac{1}{|y_j|^r}\right)
$$
\be\label{2.6}
 =C(d,r,w)\prod_{j=1}^d\left(\frac{u_j}{|y_j|}\right)^{r/2}\min\left(|y_ju_j|^{r/2},\frac{1}{|y_ju_j|^{r/2}}\right). 
\ee
Consider
$$
\sigma^r(n,\bu):= \sum_{\|\bs\|_1=n}\prod_{j=1}^d \min\left((2^{s_j}u_j)^{r/2},\frac{1}{(2^{s_j}u_j)^{r/2}}\right),\quad v\in\N_0.
$$
The following lemma was established in \cite{VT163}. 
\begin{Lemma}\label{L2.2} Let $n\in \N_0$ and $\bu\in (0,1/2]^d$. Then we have 
the following inequalities.

(I) Under condition $2^n pr(\bu)\ge 1$ we have
\be\label{2.7}
\sigma^r(n,\bu) \le C(d) \frac{\left(\log(2^{n+1}pr(\bu))\right)^{d-1}}{(2^n pr(\bu))^{r/2}}.
\ee

(II) Under condition $2^n pr(\bu)\le 1$ we have
\be\label{2.8}
\sigma^r(n,\bu) \le C(d) (2^n pr(\bu))^{r/2} \left(\log\frac{2}{2^{n}pr(\bu)}\right)^{d-1}.
\ee

\end{Lemma}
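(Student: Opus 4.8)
The plan is to reduce $\sigma^r(n,\bu)$ to a weighted lattice-point count on a simplex and extract the power of $2^n pr(\bu)$ explicitly. First I would pass to logarithmic variables. Writing $u_j = 2^{-\beta_j}$ with $\beta_j \ge 1$ (since $\bu\in(0,1/2]^d$), each factor collapses to a tent:
$$
\min\left((2^{s_j}u_j)^{r/2},\,(2^{s_j}u_j)^{-r/2}\right) = 2^{-\frac r2\,|s_j-\beta_j|}.
$$
On the index set $\|\bs\|_1=n$ the scales multiply to the invariant $\prod_{j}2^{s_j}u_j = 2^n\,pr(\bu)$, which is exactly the quantity governing the two regimes. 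Setting $\gamma_j := s_j-\beta_j$ and $\rho := n-\sum_j\beta_j = \log_2(2^n pr(\bu))$, the constraint becomes $\sum_j\gamma_j=\rho$ and
$$
\sigma^r(n,\bu) = \sum_{\sum_j\gamma_j=\rho} 2^{-\frac r2\sum_j|\gamma_j|},
$$
the sum running over the shifted lattices $\gamma_j\in\Z-\beta_j$.

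Next I would isolate the dominant exponential factor via the elementary identity $\sum_j|\gamma_j| = |\rho| + 2M(\gamma)$, where $M(\gamma)$ is the total ``wrong-sign mass'' (namely $\sum_{\gamma_j<0}|\gamma_j|$ when $\rho\ge 0$, and $\sum_{\gamma_j>0}\gamma_j$ when $\rho\le 0$). This pulls out the constant $2^{-\frac r2|\rho|}=(2^n pr(\bu))^{\mp r/2}$: in regime (I) the hypothesis $2^n pr(\bu)\ge 1$ means $\rho\ge 0$, giving $(2^n pr(\bu))^{-r/2}$, while in regime (II) we have $\rho\le 0$, giving $(2^n pr(\bu))^{r/2}$. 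In both cases
$$
\sigma^r(n,\bu) = (2^n pr(\bu))^{\mp r/2}\sum_{\gamma}2^{-rM(\gamma)},
$$
so it remains only to bound the residual sum by $C(d)(1+|\rho|)^{d-1}$. This matches the stated logarithmic factors, since $\log(2^{n+1}pr(\bu)) = (1+\rho)\log 2$ in case (I) and $\log(2/(2^n pr(\bu))) = (1-\rho)\log 2$ in case (II).

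For the residual bound I would fix the set $S$ of wrong-sign coordinates and sum over them first. Each wrong-sign coordinate contributes a one-sided geometric series $\sum 2^{-r|\gamma_j|}$ that converges uniformly for $r\ge 1$; the remaining $|S^c|\le d$ coordinates carry nonnegative values summing to $|\rho|+M_S$, so the number of admissible configurations is at most $C(d)(1+|\rho|+M_S)^{d-1}$, the lattice-point count of a $(d-1)$-dimensional simplex. Splitting $(1+|\rho|+M_S)^{d-1}\le C(d)(1+|\rho|)^{d-1}(1+M_S)^{d-1}$ and absorbing the polynomial factor $(1+M_S)^{d-1}$ against the geometric weight $2^{-rM_S}$ leaves a convergent sum, yielding $C(d)(1+|\rho|)^{d-1}$ after summing over the $2^d$ choices of $S$. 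The main obstacle is precisely this last step: controlling the interplay between the combinatorial growth of the simplex count and the geometric decay in the wrong-sign mass while the lattices $\Z-\beta_j$ are shifted by the non-integer $\beta_j$. The key point is that the geometric decay always dominates the polynomial growth, so the shifts affect only the constant $C(d)$. Assembling these pieces proves both (I) and (II).
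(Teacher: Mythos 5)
The paper itself contains no proof of Lemma \ref{L2.2}: it is stated with the remark that it was established in \cite{VT163}, so there is no in-paper argument to compare yours against, and I can only judge the proposal on its own merits. It is correct. The substitution $u_j=2^{-\beta_j}$ ($\beta_j\ge 1$), the identity $\min\bigl((2^{s_j}u_j)^{r/2},(2^{s_j}u_j)^{-r/2}\bigr)=2^{-\frac r2|s_j-\beta_j|}$, the elementary decomposition $\sum_j|\gamma_j|=|\rho|+2M(\gamma)$, and the exact extraction of $2^{-\frac r2|\rho|}=(2^n pr(\bu))^{\mp r/2}$ are all valid, and they correctly unify the two regimes into the single residual estimate $\sum_\gamma 2^{-rM(\gamma)}\le C(d)(1+|\rho|)^{d-1}$, which translates back into (\ref{2.7}) and (\ref{2.8}) because $\log(2^{n+1}pr(\bu))=(1+\rho)\log 2$ and $\log(2/(2^npr(\bu)))=(1-\rho)\log 2$. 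The residual bound also holds up under scrutiny: for a fixed wrong-sign set $S$, the correct-sign coordinates lie on unit-spaced shifted lattices and satisfy one linear equation, so mapping each coordinate to its integer part injects the admissible configurations into lattice points of a simplex, giving the count $C(d)(1+|\rho|+M_S)^{|S^c|-1}$ (the non-integer shifts indeed cost only a constant); then $(1+|\rho|+M_S)^{d-1}\le(1+|\rho|)^{d-1}(1+M_S)^{d-1}$, and bounding $(1+M_S)^{d-1}\le\prod_{j\in S}(1+|\gamma_j|)^{d-1}$ factors the wrong-sign sum into at most $d$ series of the form $\sum_{k\ge0}2^{-r(t+k)}(1+t+k)^{d-1}$, each bounded by $C(d)$ uniformly in the shift $t\ge 0$ and in $r\ge1$; summing over the $2^d$ sets $S$ finishes the argument. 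The uniformity in $r$ is worth stating explicitly, since the lemma asserts a constant $C(d)$ rather than $C(d,r)$, and your sketch delivers exactly that.

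Two small corrections. First, your displayed formula for $\sigma^r(n,\bu)$ should be an inequality ``$\le$'' if the $\gamma_j$ range over the full shifted lattices: the actual index set is $\gamma_j\ge-\beta_j$ (equivalently $s_j\ge 0$), and enlarging it is legitimate only because all terms are nonnegative --- this should be said, since it is the one place positivity is used. Second, the degenerate case $S^c=\emptyset$ (all coordinates wrong-sign) contributes nothing, since in regime (I) all-negative coordinates cannot sum to $\rho\ge 0$ (and symmetrically in regime (II)); noting this avoids applying the simplex count to an empty set of coordinates.
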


For $\bs\in \N_0^d$ -- the set of vectors with nonnegative integer coordinates, define
$$
\rho (\bs) := \{\bk \in \Z^d : [2^{s_j-1}] \le |k_j| < 2^{s_j}, \quad j=1,\dots,d\}
$$
where $[a]$ denotes the integer part of $a$. 

By (\ref{2.5}) we have for the error  
$$
\delta := \left|\int f(\bx)d\bx-\Phi(a,A)(f)\right| \le \sum_{n=1}^\infty \sum_{\|\bs\|_1=n}\sum_{\bm: aA\bm\in \rho(\bs)} |\hat f(aA\bm)|.
$$
Lemma \ref{L1.1} implies that if $n\neq 0$ is such that $2^n< a^d$ then for
$\bs$ with $\|\bs\|_1=n$ there is no $\bm$ such that  $aA\bm\in\rho(\bs)$. Let $n_0\in \N$ be the smallest number satisfying 
$2^{n_0}\ge a^d$. Then we have 
\be\label{2.9}
\delta \le \sum_{n=n_0}^\infty \sum_{\|\bs\|_1=n}\sum_{\bm: aA\bm\in \rho(\bs)} |\hat f(aA\bm)|.
\ee
Lemma \ref{L1.1} implies that for $n\ge n_0$ we have
\be\label{2.10}
|\rho(\bs)\cap \{aA\bm\}_{\bm\in \Z^d}| \le C_12^{n-n_0}, \quad \|\bs\|_1=n.
\ee
Using (\ref{2.10}) we obtain by (\ref{2.6}) for $f(\bx) = w(\bx)h^r(\bx,\bz,\bu)$
$$
\delta \le C(d,r,w) \sum_{n=n_0}^\infty \sum_{\|\bs\|_1=n} 2^{n-n_0}(pr(\bu)2^{-n})^{r/2}\prod_{j=1}^d\min\left((2^{s_j}u_j)^{r/2},\frac{1}{(2^{s_j}u_j)^{r/2}}\right).
$$
We now assume that the constant $c(d,A)$ is such that  $v_0= 2^{-n_0}$. Then for $v\ge v_0$ we have $2^n pr(\bu)\ge 1$, $n\ge n_0$. 
Using inequality (\ref{2.7}) of Lemma \ref{L2.2}   we obtain from here
$$
 \delta \le C(d,r,w) 2^{-n_0}\sum_{n=n_0}^\infty 2^{-n(r-1)}\left(\log\left(2^{n+1}pr(\bu)\right)\right)^{d-1} 
 $$
 $$
 \le C(d,r,w) 2^{-rn_0}\left(\log\left(2v/v_0\right)\right)^{d-1} \le C(d,r,w) a^{-rd}\left(\log\left(2v/v_0\right)\right)^{d-1}.
$$

\section{A lower bound for the smooth periodic discrepancy}
\label{lb}

In this section we prove a lower bound for an analog of the smooth periodic discrepancy $\tilde D^{r,o}(T)$ for any set $T$ of fixed cardinality. In fact we prove 
a weaker result. In the definition of optimal smooth periodic discrepancy
\be\label{3.1}
\tilde D^{r,o}(T):=  \inf_{\la_1,\dots,\la_m}\sup_{\bz\in[0,1)^d;\bu\in(0,1/2]^d}\left|\int_{[0,1)^d} \tilde h^r(\bx,\bz,\bu)d\bx- \sum_{\mu=1}^m \la_\mu \tilde h^r(\xi^\mu,\bz,\bu)\right|
\ee
we allow to optimize over all weights $\la_1,\dots,\la_m$. We prove a lower bound under an extra (albeit mild) restriction on the weights. Let $B$ be a positive number and $Q(B,m)$ be the set of cubature formulas 
$$
\Lambda_m(f,\xi) := \sum_{\mu=1}^m \la_\mu f(\xi^\mu),\quad \xi:= \{\xi^\mu\}_{\mu=1}^m \subset [0,1)^d,\quad \la_\mu \in \R,\quad  \mu=1,\dots,m,
$$
 satisfying the additional condition
\be\label{3.1'}
\sum_{\mu=1}^m |\lambda_\mu| \le B.
\ee
We obtain the lower estimates for the quantities
$$
     \tilde D_m^{r,B} := \inf_{\Lambda_m(\cdot,\xi) \in Q(B,m)}\sup_{\bz\in[0,1)^d;\bu\in (0,1/2]^d}\left|\int_{[0,1)^d} \tilde h^r(\bx,\bz,\bu)d\bx- \Lambda( \tilde h^r(\cdot,\bz,\bu),\xi)\right|  .
$$
 We prove the following relation.   
 \begin{Theorem}\label{T3.1} Let $r\in\N$ be an even number. Then
$$
    \tilde D_m^{r,B} \geq   C(r,B,d)   m^{-r}(\log   m)^{d-1},   \qquad
C(r,B,d)>0.
$$
\end{Theorem}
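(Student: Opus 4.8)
The plan is to pass to the Fourier side and exploit the positivity that the evenness of $r$ provides. Since $h^r(\cdot,u)$ is the $r$-fold convolution of $\chi_{[-u/2,u/2)}$, its Fourier transform is $(\sin(\pi u\xi)/(\pi\xi))^r$, and periodizing the product $\prod_j h^r(x_j-z_j,u_j)$ gives the Fourier coefficients $\hat{\tilde h}^r(\bk,\bz,\bu)=e^{-2\pi i(\bk,\bz)}g_\bu(\bk)$, where $g_\bu(\bk):=\prod_{j=1}^d(\sin(\pi u_jk_j)/(\pi k_j))^r$; because $r$ is even, $g_\bu(\bk)\ge 0$, and $g_\bu(\mathbf 0)=pr(\bu)^r$. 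Writing $A_\bk:=\sum_{\mu=1}^m\la_\mu e^{2\pi i(\bk,\xi^\mu)}$, so that $|A_\bk|\le\sum_\mu|\la_\mu|\le B$ by (\ref{3.1'}), the error for a single test function becomes $R(\bz,\bu)=pr(\bu)^r(1-A_{\mathbf 0})-\sum_{\bk\ne\mathbf 0}g_\bu(\bk)A_\bk e^{-2\pi i(\bk,\bz)}$. Integrating $R(\cdot,\bu)$ against $e^{2\pi i(\bk,\bz)}$ over $[0,1)^d$ isolates one frequency and yields the basic inequality $\sup_{\bz}|R(\bz,\bu)|\ge g_\bu(\bk)|A_\bk|$ for every $\bk$ and every $\bu$; optimizing $\bu$ over a dyadic box $\rho(\bs)$ makes $g_\bu(\bk)\asymp 2^{-r\|\bs\|_1}$ there.

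First I would set $2^{n}\asymp m$ and work on the hyperbolic layer $\bigcup_{\|\bs\|_1=n}\rho(\bs)$. The single-frequency bound above only gives $\asymp m^{-r}\max_\bk|A_\bk|\lesssim m^{-r}B$ and misses the factor $(\log m)^{d-1}$; the logarithmic gain must come from the whole layer at once, and this is exactly where the restriction $\sum_\mu|\la_\mu|\le B$ enters. The mechanism I would use is the standard dual/extremal construction for cubature lower bounds: the hyperbolic cross $\Gm$ of all $\bk$ with $\prod_j\max(1,|k_j|)\le c\,2^{n}$ has $\asymp 2^{n}n^{d-1}\asymp m(\log m)^{d-1}$ elements, hence more than the $m$ nodes, so there is a nonzero trigonometric polynomial $t(\bx)=\sum_{\bk\in\Gm}c_\bk e^{2\pi i(\bk,\bx)}$ with $t(\xi^\mu)=0$ for all $\mu$, which can be chosen so that $\hat t(\mathbf 0)=1$. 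For such $t$ the cubature error equals $\int t-\sum_\mu\la_\mu t(\xi^\mu)=1$, and normalizing $t$ so that it becomes an admissible test function of the $\tilde h^r$ type costs a factor comparable to $m^{-r}(\log m)^{d-1}$ coming from the size of $\Gm$; the counting of the layer (as in Lemma~\ref{L2.2}) is what produces the power $(d-1)$ of the logarithm.

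The main obstacle, and the reason only the weaker bounded-weight statement is proved, is the transfer step: the quantity $\tilde D^{r,B}_m$ only probes the very special periodized bumps $\tilde h^r(\cdot,\bz,\bu)$, not arbitrary smooth functions, so one must realize a constant multiple of the extremal layer polynomial $t$, or its relevant pairing, by a superposition of these bumps. Here the evenness of $r$ is decisive: since $g_\bu(\bk)\ge 0$, a positive superposition of bumps over $\bz$ and over the shapes $\bu$ adapted to the boxes $\rho(\bs)$ with $\|\bs\|_1=n$ reproduces a positive-definite kernel on $\Gm$ with no cancellation, so that its pairing with $\nu=\sum_\mu\la_\mu\delta_{\xi^\mu}$ is controlled from below, while $\sum_\mu|\la_\mu|\le B$ dominates the contribution of the frequencies outside $\Gm$ (the non-polynomial tail). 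Carrying this out carefully — matching the normalization constant so as to obtain exactly $(\log m)^{d-1}$ rather than a smaller power, and ensuring the extracted pair $(\bz,\bu)$ is admissible with $\bu\in(0,1/2]^d$ — is the delicate part; the Fourier setup, the per-frequency inequality, and the hyperbolic sums of Lemma~\ref{L2.2} are the routine ingredients.
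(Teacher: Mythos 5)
Your opening paragraph (the Fourier expansion of the error, the positivity $g_\bu(\bk)\ge 0$ for even $r$, and the per-frequency bound $\sup_\bz|R(\bz,\bu)|\ge g_\bu(\bk)|A_\bk|$) matches the paper's setup, and you correctly diagnose that single frequencies cannot produce the factor $(\log m)^{d-1}$. But the mechanism you then propose for the logarithmic gain is not the paper's, and as sketched it does not close. Two concrete problems. First, the transfer step is unsubstantiated: the quantity $\tilde D^{r,B}_m$ only tests the bumps $\tilde h^r(\cdot,\bz,\bu)$, so a hyperbolic-cross polynomial $t$ vanishing at the nodes cannot be ``normalized so that it becomes an admissible test function''; the only legitimate move is to represent the pairing with $t$ as a superposition $\int R(\bz,\bu)\,d\sigma(\bz,\bu)$ and pay the total variation $\|\sigma\|_{TV}$, and you give neither a construction of $\sigma$ nor a bound on $\|\sigma\|_{TV}$. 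Second, and more fundamentally, your positivity argument pairs a positive-definite kernel with the \emph{signed} measure $\sum_\mu\la_\mu\delta_{\xi^\mu}$, and such a pairing is not sign-controlled: the coefficients $A_\bk$ are complex and oscillate, so ``no cancellation'' fails exactly where you need it. (A smaller gap: a nonzero $t$ with $t(\xi^\mu)=0$ exists by dimension count, but it cannot in general be chosen with $\hat t(\mathbf 0)=1$; already for $d=1$, equispaced nodes force every such $t$ of low degree to have zero mean, so the normalization requires an argument you do not supply.)

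The idea you are missing is the duplication (symmetrization) trick, which is precisely what makes positivity usable against signed weights: the paper forms the auxiliary formula $\Lambda^*(f)=\sum_\nu\la_\nu\Lambda_m(f(\bx-\xi^\nu),\xi)$, whose Fourier coefficients are $\Lambda^*(\bk)=|\Lambda(\bk)|^2\ge 0$ (see (\ref{3.2})). Since the family $\tilde h^r(\cdot,\bz,\bu)$ is invariant under shifts in $\bz$ and $\sum_\nu|\la_\nu|\le B$, the error of $\Lambda^*$ is still $\le 2Be_m$ (see (\ref{3.5})), while the error expansion now involves only the nonnegative quantities $|\Lambda(\bk)|^2\hat{\tilde h}^r(\bk,\mathbf 0,\bu)$ --- genuinely no cancellation, for even $r$. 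Integrating in $\bu$ over $(0,1/2]^d$, using $\int_0^{1/2}\bigl(\sin(\pi k u)/(\pi k)\bigr)^r\,du\ge c(r)\bar k^{-r}$, reduces everything to the known Lemma \ref{L3.1}, $\sum_{\bk\ne 0}|\Lambda(\bk)|^2\nu(\bar{\bk})^{-r}\ge C(r,d)|\Lambda(\mathbf 0)|^2 m^{-r}(\log m)^{d-1}$, which is where the factor $(\log m)^{d-1}$ (and, inside its proof, the hyperbolic-cross counting you allude to) actually lives; the proof is finished by noting $|\Lambda(\mathbf 0)|\ge c(r,d)>0$ via (\ref{3.3"}). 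So your plan would need both the duplication step and Lemma \ref{L3.1} (or an independent proof of it) to become a proof; without them it stalls at exactly the point you yourself flag as the ``delicate part''.
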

 
\begin{proof} Theorem \ref{T3.1} is an analog of Theorem 3 from \cite{VT50} (see also \cite{T11}). Our proof follows the ideas from \cite{VT50}.
We use a  notation
$$
     \Lambda(\bk) := \Lambda_m(e^{i2\pi(\bk,\bx)},\xi) = \sum_{\mu=1}^m  \lambda_\mu
e^{i2\pi(\bk,\xi^\mu)}.
$$
Let a set $T$ with cardinality $|T|=m$ be given. We specify $\xi:=T$ and consider 
along with the cubature formula $\Lambda(\cdot,\xi)$ the following auxhilary cubature 
formula 
$$
     \Lambda^*(f) :=   \sum_{\nu=1}^m     \lambda_\nu  \Lambda_m(f(\bx  -
\xi^\nu),\xi).
$$
Then
\be\label{3.2}
     \Lambda^*(\bk) =  \Lambda^*(e^{i2\pi(\bk,\bx)}) = \sum_{\nu=1}^m  \lambda_\nu
\sum_{\mu=1}^m \lambda_\mu e^{i2\pi(\bk,(\xi^\mu - \xi^\nu))} = 
      |\Lambda(\bk)|^2. 
\ee
Suppose that for each $f(\bx):=f(\bx,\bu):=\tilde h^r(\bx,\bz,\bu)$ we have for all $\bz\in [0,1)^d$ 
and $\bu\in (0,\frac{1}{2}]^d$ the bound
\be\label{3.3}
|\hat f(\mathbf 0,\bu) - \Lambda_m(f,\xi)| \le e_m.
\ee
Integrating over $[0,1)^d$ with respect to $\bz$ we get from here
\be\label{3.3'}
|\hat f(\mathbf 0,\bu)(1 - \Lambda_m(\mathbf 0))| \le e_m.
\ee
In particular, this implies
\be\label{3.3"}
| 1 - \Lambda_m(\mathbf 0)| \le c(r,d)e_m.
\ee
Therefore,   we have
$$
|\Lambda^*(f) - \Lambda^*(\mathbf 0) \hat f(\mathbf 0)| \le |\Lambda^*(f) - \Lambda_m(\mathbf 0) \hat f(\mathbf 0)| + |(\Lambda_m(\mathbf 0)-\Lambda_m(\mathbf 0)^2) \hat f(\mathbf 0)|
$$
\be\label{3.5}
  \le \left|\sum_{\mu=1}^m \la_\nu\left(\Lambda_m(f(\bx-\xi^\nu),\xi) - \hat f(\mathbf 0)\right)\right| + Be_m\le 2Be_m.
\ee
We now need a known result on the lower bound for the weighted sum of 
$\{|\Lambda(\bk)|^2\}$ (see \cite{TBook} and \cite{T11}).

\begin{Lemma}\label{L3.1} The following inequality is valid for any
$r > 1$
$$
\sum_{ \bk\ne 0}\bigl|\Lambda( \bk)\bigr|^2\nu
(\bar{ \bk})^{-r}\ge
C(r,d)\bigl|\Lambda(\mathbf 0)\bigr|^2 m^{-r}(\log m)^{d-1},
$$
where $\bar{k_j}:=\max(|k_j|,1)$ and $\nu(\bar{\bk}):=\prod_{j=1}^d \bar{k_j}$.
\end{Lemma}
 By (\ref{3.2})   we get
\be\label{3.8}
|\Lambda^*(f) - \Lambda^*(\mathbf 0) \hat f(\mathbf 0)| = \left|\sum_{\bk\neq \mathbf 0} \Lambda^*(\bk) \hat f(\bk)\right| = \left|\sum_{\bk\neq \mathbf 0} |\Lambda(\bk)|^2 \hat f(\bk)\right|.
\ee
Applying (\ref{3.8}) for $f(\bx)= \tilde h^r(\bx,\mathbf 0, \bu)$ and using (\ref{3.5}) we obtain
\be\label{3.9}
2Be_m \ge |\Lambda^*(f) - |\Lambda(\mathbf 0)|^2 \hat f(\mathbf 0)| = \left|\sum_{\bk\neq \mathbf 0} |\Lambda(\bk)|^2  \hat{\tilde h}^r(\bk,\mathbf 0, \bu)\right|.
\ee
Next
$$
\hat {\tilde h}^r(0, 0, u)=u^r,\quad \hat {\tilde h}^r(k, 0, u)=\left(\frac{\sin(\pi ku)}{\pi k}\right)^r,\quad k\neq 0,
$$
which implies for $r$ even
$$
\int_0^{1/2}\hat {\tilde h}^r(k, 0, u)du \ge c(r)(\bar k)^{-r}.
$$
Integrating the right hand side of (\ref{3.9}) with respect to $\bu$ over $(0,1/2]^d$ and using Lemma \ref{L3.1}  we get for $r$ even
$$
e_m \ge C(r,d)|\Lambda(\mathbf 0)|^2 m^{-r} (\log m)^{d-1}. 
$$
It is clear (see, for instance (\ref{3.3"})) that it must be $|\Lambda(\mathbf 0)|\ge c(r,d)>0$. 
This completes the proof of Theorem \ref{T3.1}.

\end{proof}

\section{Discussion}
\label{Disc}

The paper addresses some issues of discrepancy theory. Discrepancy theory is a well established topic with deep elaborate technique and with some open fundamental problems (see, for instance,  \cite{BC}, \cite{Mat}, \cite{TBook}, \cite{T11}). 
One of the most acute open problems is the problem of the right order of decay of 
the quantity
$$
D(m,d)_\infty := \inf_{T} D(T,m,d)_\infty.
$$
The upper bound is known (see \cite{BC})
\be\label{4.1}
D(m,d)_\infty \le C(d) m^{-1} (\log m)^{d-1}.
\ee
In case $d=2$ it is complemented by the lower bound proved by W. Schmidt \cite{Sch1} 
\be\label{4.2}
D(m,2)_\infty \ge Cm^{-1}\log m . 
\ee
In the case $d\ge 3$ the problem is still open. 
The following conjecture has been formulated in \cite{BC} as an excruciatingly difficult great open problem.
\begin{Conjecture}\label{Con4.1} We have for $d\ge 3$
$$
D(m,d)_\infty \ge C(d)m^{-1}(\log m)^{d-1}. 
$$
\end{Conjecture}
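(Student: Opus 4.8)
The plan is to reduce the conjecture, via $L_\infty$--$L_1$ duality, to a purely analytic inequality for dyadic (Haar) sums -- the \emph{small ball inequality} -- and I should flag at the outset that this reduction turns Conjecture \ref{Con4.1} into a statement that is itself open for $d\ge 3$; the step I expect to be the genuine obstacle is precisely this analytic inequality. First I would pass from the set functional to the (unnormalized) signed discrepancy function
$$\mathcal D(\bx) := \sum_{\mu=1}^m \chi_{[\mathbf 0,\bx)}(\xi^\mu) - m\,x_1\cdots x_d,\qquad \bx\in[0,1)^d,$$
so that $D(T,m,d)_\infty = m^{-1}\|\mathcal D\|_\infty$ uniformly in $T$, and expand $\mathcal D$ in the multivariate Haar system. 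Grouping the Haar functions by the \emph{shape} $\bs\in\N_0^d$ of their dyadic support (side lengths $2^{-s_1},\dots,2^{-s_d}$), I fix the single hyperbolic level $\|\bs\|_1 = n$ with $2^n \asymp m$. On this level each dyadic box has volume $\asymp 1/m$, hence contains $O(1)$ points and a positive proportion contain none; following Roth and Schmidt I would build, for each such $\bs$, an $r$-function
$$f_\bs := \sum_{\bk} \varepsilon_{\bs,\bk}\, h_{\bs,\bk},\qquad \varepsilon_{\bs,\bk}\in\{-1,+1\},\ \|f_\bs\|_\infty\le 1,$$
with signs chosen so that the local discrepancy is reinforced, yielding the correlation bound $\langle \mathcal D, f_\bs\rangle \gtrsim 1$ uniformly in $\bs$ and $m$.

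The $L_2$ baseline shows exactly where the difficulty sits. The test function $F := \sum_{\|\bs\|_1 = n} f_\bs$ is a sum of $\asymp n^{d-1}$ mutually orthogonal blocks (distinct shapes give orthogonal $r$-functions), so $\|F\|_2 \lesssim n^{(d-1)/2}$ while $\langle \mathcal D, F\rangle \gtrsim n^{d-1}$; duality then gives $\|\mathcal D\|_2 \gtrsim n^{(d-1)/2}$, i.e. Roth's bound $D(m,d)_\infty \gtrsim m^{-1}(\log m)^{(d-1)/2}$. The square-root loss is inherent to the $L_2$ method and accounts for the entire gap to the conjecture. To remove it I would replace $F$ by the Riesz product
$$\Psi := \prod_{\|\bs\|_1 = n}\bigl(1 + \gamma f_\bs\bigr) - 1,\qquad 0<\gamma\le 1,$$
and estimate $\|\mathcal D\|_\infty \ge \langle \mathcal D, \Psi\rangle / \|\Psi\|_1$. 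Formally the linear part of $\Psi$ contributes $\gamma\sum_{\|\bs\|_1=n}\langle \mathcal D, f_\bs\rangle \gtrsim \gamma\, n^{d-1}$, and if the factors behaved like independent mean-zero functions then $\prod(1+\gamma f_\bs)\ge 0$ with integral $1$, forcing $\|\Psi\|_1 \le 2$ and hence $\|\mathcal D\|_\infty \gtrsim n^{d-1}$, i.e. the conjectured $D(m,d)_\infty \gtrsim m^{-1}(\log m)^{d-1}$.

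The main obstacle is that in dimension $d\ge 3$ the $r$-functions on a single hyperbolic level are \emph{not} independent: products $f_{\bs^{(1)}}\cdots f_{\bs^{(k)}}$ of distinct factors can coincide with, or become constant on, supports already occupied by lower-order terms -- the \emph{coincidences} -- and these spoil both the near-cancellation needed in the higher-order part of $\langle \mathcal D,\Psi\rangle$ and the bound $\|\Psi\|_1 = O(1)$. Controlling the coincidences is equivalent to the small ball inequality
$$\Bigl\|\sum_{\|\bs\|_1 = n}\ \sum_{\bk} a_{\bs,\bk}\, h_{\bs,\bk}\Bigr\|_\infty \ \gtrsim\ n^{-(d-2)/2}\,2^{-n}\sum_{\|\bs\|_1=n}\sum_{\bk}|a_{\bs,\bk}|,$$
whose gain of the factor $n^{1/2}$ over the trivial $L_2$ exponent $(d-1)/2$ is precisely what converts the $L_2$ bound into the $L_\infty$ bound. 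For $d=2$ this inequality is a theorem (Talagrand; Temlyakov), which is exactly why Schmidt's bound \eqref{4.2} attains the full power of $\log m$; for $d\ge 3$ it is open, and the best available progress -- extracting a small positive gain over the trivial exponent (Bilyk--Lacey--Vagharshakyan) -- yields only $D(m,d)_\infty \gtrsim m^{-1}(\log m)^{(d-1)/2 + \eta(d)}$ with $\eta(d)>0$ small. Thus my plan reduces Conjecture \ref{Con4.1} to proving the small ball inequality in full strength for $d\ge 3$, and it is this analytic inequality, rather than the passage from it to discrepancy, that I expect to be the decisive and as yet unresolved difficulty.
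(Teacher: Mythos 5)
There is nothing to compare your proposal against: the statement you were given is Conjecture \ref{Con4.1}, which the paper does not prove and explicitly describes as open (quoting \cite{BC}, an ``excruciatingly difficult great open problem''), with the best known partial results being the bounds of \cite{BL} and \cite{BLV} with exponent $(d-1)/2+\delta(d)$. Your write-up is commendably honest on this point -- you flag from the outset that your reduction terminates in an unresolved analytic inequality -- and as a survey of the Roth/Schmidt $L_2$ method, the square-root loss, and the Riesz-product strategy it is accurate. But it is not a proof, and it could not be one.

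Beyond the openly acknowledged gap, there is a subtler overstatement you should correct: the claim that controlling the coincidences, and hence Conjecture \ref{Con4.1}, is \emph{equivalent} to the small ball inequality. No such formal implication is known in either direction. The small ball inequality concerns arbitrary coefficients $a_{\bs,\bk}$ on a single hyperbolic level, whereas the discrepancy function has nontrivial Haar content on all levels plus the point-counting part, and the correlation bound $\langle \mathcal D, f_\bs\rangle \gtrsim 1$ only holds for shapes fine enough relative to $m$; conversely, the discrepancy bound says nothing about general coefficient sums. Even in $d=2$, Schmidt's theorem (\ref{4.2}) is not a corollary of the Talagrand--Temlyakov inequality: the two have parallel Riesz-product-style proofs, but each must be run separately, and in $d\ge 3$ the papers \cite{BL}, \cite{BLV} likewise prove the small ball improvement and the discrepancy improvement as distinct (though technically cognate) theorems. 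So even a full resolution of the small ball conjecture for $d\ge 3$ would not, by any known argument, yield Conjecture \ref{Con4.1}; your ``reduction'' is a heuristic parallel, not a logical reduction, and the conjecture would remain open at the end of your program unless the transfer step were itself established.
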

This problem is still open. Recently, D. Bilyk and M. Lacey \cite{BL} and D. Bilyk, M. Lacey, and A. Vagharshakyan \cite{BLV} proved
$$
D(m,d)_\infty \ge C(d)m^{-1}(\log m)^{(d-1)/2 + \delta(d)} 
$$
with some positive $\delta(d)$. 

In this paper we introduce a concept of $r$-smooth discrepancy and prove the lower bound for the $r$-smooth periodic discrepancy $\tilde D^{r,B}_m$ (see Theorem \ref{T3.1}) for $r$ even numbers. This lower bound does not prove Conjecture \ref{Con4.1} but it supports it. There is another variant of smooth discrepancy, which 
shows similar behavior. We discuss it in detail (see \cite{TBook} and \cite{T11}). 
In the definition of the $r$-discrepancy instead of the characteristic function (this corresponds to $1$-discrepancy) we use the following function
\begin{align*}
B_r( \btt, \bx)&:= \prod_{j=1}^d\bigl((r-1)!\bigr)^{-1}
(t_j - x_j )_+^{r-1},\\
 \btt, \bx&\in\Omega_d,\qquad (a)_+ := \max (a,0).
\end{align*}
Then for point set $\xi:=\{\xi^\mu\}_{\mu=1}^m$ of cardinality $m$ and weights $\Lambda:=\{\la_\mu\}_{\mu=1}^m$ we define the $r$-discrepancy of the pair $(\xi,\Lambda)$ by the formula
\be\label{4.3}
D_r (\xi,\Lambda,m,d)_{\infty}:=\sup_{\btt \in (0,1]^d}\left |\sum_{\mu=1}^{m}\lambda_{\mu}B_r ( \btt,\xi^{\mu})-
\prod_{j=1}^d (t_j^r /r!)\right|  .
\ee
Define
$$
D_r (\xi,m,d)_{\infty}:= \inf_{\Lambda}D_r (\xi,\Lambda,m,d)_{\infty}.
$$
Then $D_r (\xi,m,d)_{\infty}$ is close in a spirit to the quantity $D^{r,o}(\xi)$ defined in 
(\ref{1.2'}). The following known result (see \cite{TBook} and \cite{T11}) gives the lower bounds in the case of weights $\Lambda$ satisfying an extra condition (\ref{3.1'}).
\begin{Theorem}\label{T4.1} Let $B$ be a positive number. For any points $\xi^1,\dots,\xi^m \subset \Omega_d$ and any weights $\Lambda =(\lambda_1,\dots,\lambda_m)$ satisfying the condition
$$
\sum_{\mu=1}^m|\lambda_\mu| \le B
$$
we have for even integers $r$
$$
D_r(\xi,\Lambda,m,d)_\infty \ge C(d,B,r)m^{-r}(\log m)^{d-1}
$$
with a positive constant $C(d,B,r)$.
\end{Theorem}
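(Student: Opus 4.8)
The plan is to follow the same route as in the proof of Theorem \ref{T3.1}, since $D_r(\xi,\Lambda,m,d)_\infty$ is the non-periodic counterpart of $\tilde D_m^{r,B}$ with the hat function $\tilde h^r$ replaced by the truncated power kernel $B_r$. Fix a point set $\xi=\{\xi^\mu\}_{\mu=1}^m$ and weights $\Lambda$ with $\sum_{\mu=1}^m|\la_\mu|\le B$, and set $e_m:=D_r(\xi,\Lambda,m,d)_\infty$, so that $\left|\sum_{\mu=1}^m\la_\mu B_r(\btt,\xi^\mu)-\prod_{j=1}^d(t_j^r/r!)\right|\le e_m$ for every $\btt\in(0,1]^d$. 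As in Theorem \ref{T3.1} I would introduce the exponential sums $\Lambda(\bk)=\sum_{\mu=1}^m\la_\mu e^{i2\pi(\bk,\xi^\mu)}$ together with the auxiliary formula $\Lambda^*(f)=\sum_{\nu=1}^m\la_\nu\Lambda_m(f(\cdot-\xi^\nu),\xi)$, for which $\Lambda^*(\bk)=|\Lambda(\bk)|^2$. The goal is then to convert the uniform bound governed by $e_m$ into a lower bound for a weighted $\ell_2$ sum of the quantities $|\Lambda(\bk)|^2$ and to finish with Lemma \ref{L3.1}.

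The kernel $B_r$ enters on the Fourier side. Treating $\btt$ as a parameter, I would compute the $\bx$-Fourier coefficients of the periodization of $\bx\mapsto B_r(\btt,\bx)$ on $\Omega_d$; by the product structure it suffices to understand the univariate integral $\int_0^1((r-1)!)^{-1}(t-x)_+^{r-1}e^{-2\pi i kx}\,dx$. After subtracting the zero-mode (polynomial) part $t^r/r!$, these coefficients are, up to normalization, the Fourier coefficients of a Bernoulli-type kernel and decay like $(\bar k)^{-r}$; crucially, when $r$ is even the factor $(2\pi i k)^{-r}$ is real with a fixed sign, so integrating the coefficient in the parameter $t$ over $(0,1]$ produces a quantity comparable to $c(r)(\bar k)^{-r}$ with a definite sign. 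This is the exact analog of the computation $\int_0^{1/2}(\sin(\pi ku)/(\pi k))^r\,du\ge c(r)(\bar k)^{-r}$ used for even $r$ in Theorem \ref{T3.1}, and it is the place where the hypothesis that $r$ be even is indispensable.

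With these ingredients I would assemble the estimate exactly as in $(\ref{3.5})$--$(\ref{3.9})$: using $\sum_\mu|\la_\mu|\le B$ to control the error produced by replacing $\Lambda^*(\mathbf 0)\hat f(\mathbf 0)$ by $|\Lambda(\mathbf 0)|^2\hat f(\mathbf 0)$, one gets $\left|\sum_{\bk\ne\mathbf 0}|\Lambda(\bk)|^2\hat f(\bk)\right|\le C(B)e_m$; integrating this inequality in the parameter $\btt$ over $(0,1]^d$ and inserting the lower bound on the integrated Fourier coefficients reduces matters to the weighted sum $\sum_{\bk\ne\mathbf 0}|\Lambda(\bk)|^2\nu(\bar\bk)^{-r}$, to which Lemma \ref{L3.1} applies and yields $e_m\ge C(r,d)|\Lambda(\mathbf 0)|^2m^{-r}(\log m)^{d-1}$. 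A lower bound $|\Lambda(\mathbf 0)|\ge c(r,d)>0$ follows by testing the integration error on suitable parameters $\btt$, the analog of the bound $|1-\Lambda_m(\mathbf 0)|\le c\,e_m$, which closes the argument. The main obstacle is that $B_r(\btt,\cdot)$ is genuinely non-periodic on $\Omega_d$, so the clean identity $\Lambda^*(\bk)=|\Lambda(\bk)|^2$ and the Bernoulli Fourier expansion are only available after periodizing the kernel (or restricting attention to interior parameters $\btt$), and one must check that the boundary and truncation terms introduced by this periodization are dominated by $e_m$ and do not degrade the $(\bar k)^{-r}$ decay.
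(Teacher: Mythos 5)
First, a point of comparison: the paper itself does not prove Theorem \ref{T4.1} --- it is quoted as a known result from \cite{TBook} and \cite{T11}, and only its periodic analog, Theorem \ref{T3.1}, is proved here. Your proposal therefore has to stand on its own, and it does not: the issue you defer to the last paragraph as something ``one must check'' is not a routine verification but exactly where the argument breaks, in two places at once. The step corresponding to (\ref{3.5}) requires that the cubature error on every torus translate $f(\cdot-\xi^\nu)$ again be bounded by $e_m$; in the periodic proof this is automatic because the family $\{\tilde h^r(\cdot,\bz,\bu)\}$ is invariant under translations modulo $1$ (a shift of $\bz$), whereas the family $\{B_r(\btt,\cdot)\}_{\btt\in(0,1]^d}$ has no such invariance: a wrapped translate of a truncated power is not of the form $B_r(\btt',\cdot)$, and the hypothesis $D_r(\xi,\Lambda,m,d)_\infty\le e_m$ says nothing about it. Your Fourier claim fails at the same spot: subtracting the zero mode $\prod_j t_j^r/r!$ does \emph{not} leave coefficients of order $\nu(\bar\bk)^{-r}$. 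In one variable the periodization of $x\mapsto(t-x)_+^{r-1}/(r-1)!$ has a jump of size $t^{r-1}/(r-1)!$ across the seam $x=0\equiv 1$, so its $k$-th Fourier coefficient decays only like $|k|^{-1}$; the decay $|k|^{-r}$ emerges only after subtracting the full Taylor polynomial of degree $r-1$ at the endpoint, and the cubature error on that polynomial is an $O(1)$ quantity that $e_m$ does not control. The same loss of translation averaging invalidates your final step as well: in the non-periodic setting one \emph{cannot} deduce $|\Lambda(\mathbf 0)|\ge c>0$ from smallness of $D_r$ (the analog of (\ref{3.3"}) is false). Indeed, take any good formula and add one node at $(1-\eta,\dots,1-\eta)$ with weight $-1$: every value $B_r(\btt,\cdot)$ at that node is at most $\eta^{(r-1)d}$, so $D_r$ changes negligibly and the weights stay bounded, yet $\Lambda(\mathbf 0)$ becomes as close to $0$ as desired.

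Moreover, the absorption you hope for is impossible, not merely unproven. Take $d=1$, $r=4$, and write $\tilde B_4(t,\cdot)$ for the periodization of $B_4(t,\cdot)$. A direct computation gives
$$
\operatorname{Re}\hat{\tilde B}_4(t,k)=\frac{t^2}{2(2\pi k)^2}-\frac{1-\cos(2\pi kt)}{(2\pi k)^4},
\qquad
\int_0^1\operatorname{Re}\hat{\tilde B}_4(t,k)\,dt=\frac{1}{6(2\pi k)^2}-\frac{1}{(2\pi k)^4}\asymp k^{-2},
$$
a \emph{positive} quantity of order $k^{-2}$, not $k^{-4}$: the boundary terms dominate the Bernoulli-type term. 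So if your inequality $\bigl|\sum_{k\ne 0}|\Lambda(k)|^2\hat{\tilde B}_4(t,k)\bigr|\le C(B)e_m$ were available, symmetrizing in $k\leftrightarrow -k$ and integrating in $t$ would give $\sum_{k\ne 0}|\Lambda(k)|^2\bar k^{-2}\le C'(B)e_m$, and Lemma \ref{L3.1} (with exponent $2$) would force $e_m\ge c\,m^{-2}$ for every bounded-weight formula with $\Lambda(\mathbf 0)$ of order $1$ --- contradicting the classical fact that one-dimensional bounded-weight formulas achieve $D_4(\xi,\Lambda,m,1)_\infty\le Cm^{-4}$ (see the upper bounds for the $r$-discrepancy in \cite{TBook}, \cite{T11}). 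Thus the periodization corrections are exactly as large as the mechanism you want to run, and the non-periodic Theorem \ref{T4.1} requires a genuinely different ingredient; in the cited literature it is obtained through the connection between the $r$-discrepancy and errors of bounded-weight cubature formulas on function classes (Theorem 3 of \cite{VT50} and its development in \cite{TBook}, \cite{T11}), where the polynomial boundary terms are eliminated structurally rather than absorbed into $e_m$.
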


Theorem \ref{T3.1} is an analog of the above Theorem \ref{T4.1}.

The concept of fixed volume discrepancy was introduced and studied in \cite{VT163}.
It is an interesting concept by itself and it is closely related to the concept of dispersion. 
For $n\ge 1$ let $T$ be a set of points in $[0,1)^d$ of cardinality $|T|=n$. The volume of the largest empty (from points of $T$) axis-parallel box, which can be inscribed in $[0,1)^d$, is called the dispersion of $T$:
$$
\text{disp}(T) := \sup_{B\in\cB: B\cap T =\emptyset} vol(B).
$$
An interesting extremal problem is to find (estimate) the minimal dispersion of point sets of fixed cardinality:
$$
\text{disp*}(n,d) := \inf_{T\subset [0,1)^d, |T|=n} \text{disp}(T).
$$
It is known that 
\be\label{4.4}
\text{disp*}(n,d) \le C^*(d)/n.
\ee 
A trivial lower bound disp*$(n,d) \ge (n+1)^{-1}$ combined with (\ref{4.4}) shows that the optimal rate of decay 
of dispersion with respect to cardinality $n$ of sets is $1/n$. 
Another interesting problem is to find (provide a construction) of sets $T$ with cardinality $n$, which have optimal rate of decay of dispersion: $\text{disp}(T)\le C(d)/n$.
Inequality (\ref{4.4}) with $C^*(d)=2^{d-1}\prod_{i=1}^{d-1}p_i$, where $p_i$ denotes the $i$th prime number, was proved in \cite{DJ} (see also \cite{RT}). The authors of \cite{DJ} used the Halton-Hammersly set of $n$ points (see \cite{Mat}). Inequality (\ref{4.4}) with $C^*(d)=2^{7d+1}$ was proved in 
\cite{AHR}. The authors of \cite{AHR}, following G. Larcher, used the $(t,r,d)$-nets (see \cite{NX} and \cite{Mat} for results on $(t,r,d)$-nets).   For further recent results on dispersion we refer the reader to papers \cite{Ull}, \cite{Rud}, \cite{Sos} and references therein.
In \cite{VT163} we proved that 
the Fibonacci and the Frolov point sets have optimal in the sense of order rate of decay 
of dispersion. This result was derived from the bounds on the $2$-smooth fixed volume discrepancy of the corresponding point sets. In the case of the Frolov point sets it is provided by Corollary \ref{C1.1} formulated above.

{\bf Acknowledgment.} The author would like to thank the Erwin Schr{\"o}dinger International Institute
for Mathematics and Physics (ESI) at the
University of Vienna for support. This paper was written, when the author participated in 
the ESI-Semester "Tractability of High Dimensional Problems and Discrepancy",
September 11--October 13, 2017.


\begin{thebibliography}{9999}

\bibitem{AHR} C. Aistleitner, A. Hinrichs, and D. Rudolf, On the size of the largest empty box amidst a point set, Discrete Appl. Math. {\bf 230} (2017), 146-150; arXiv:1507.02067v3 [cs.CG] 18 Jun 2017.

\bibitem{BC}  J. Beck and W. Chen, Irregularities of distribution, Cambridge University Press, Cambridge, 1987.

\bibitem{BL} D. Bilyk and M. Lacey, On the Small  Ball  Inequality in three
dimensions, {\em Duke Math J.}, {\bf 143}  (2008), 81--115.

\bibitem{BLV} D. Bilyk, M. Lacey and A. Vagharshakyan, On the Small Ball 
Inequality in all dimensions, {\em J. Func. Analysis}, {\bf 254} (2008),
2470--2502.

%\bibitem{BSS} J. Batson, D.A. Spielman, and N. Srivastava (2012), 
%Twice-Ramanujan Sparsifiers, SIAM J. Comput., {\bf 41} (2012), 1704--1721.

%\bibitem{BCDDT} P. Binev, A. Cohen, W. Dahmen, R. DeVore, and V.N. Temlyakov,
%Universal algorithms for learning theory. Part I: piecewise constant functions,
%Journal of Machine Learning Theory (JMLT),{\bf 6} (2005), 1297--1321.
 
%\bibitem{BLM} J. Bourgain, J. Lindenstrauss and V. Milman, Approximation of zonoids by zonotopes, Acta Math., {\bf 162} (1989), 73--141.

\bibitem{DTU} Ding Dung, V.N. Temlyakov, and T. Ullrich, Hyperbolic Cross Approximation, arXiv:1601.03978v2 [math.NA] 2 Dec 2016.

\bibitem{DJ} A. Dumitrescu and M. Jiang, On the largest empty axis-parallel box amidst $n$ points, Algorithmica, {\bf 66} (2013), 225--248.

\bibitem{Fro1}  K.K. Frolov,  Upper bounds on the error of quadrature
formulas on classes of functions, Dokl. Akad. Nauk SSSR {\bf 231} (1976), 
818--821;
 English transl. in  Soviet Math. Dokl., {\bf 17} (1976).

%\bibitem{GZ} E. Gine and J. Zinn, Some limit theorems for empirical processes, 
%Ann. Prob., {\bf 12} (1984), 929--989.

 %\bibitem{GKKW}  L. Gy{\"o}rfy, M. Kohler, A. Krzyzak, and H. Walk, A
%distribution-free theory of nonparametric regression, Springer,  Berlin, 2002.

%\bibitem{Ka} B.S. Kashin, Lunin's method for selecting large submatrices with small norm, Matem. Sb., {\bf 206} (2015), 95--102. 
 
%\bibitem{KT3} B.S. Kashin and V.N. Temlyakov, On a norm and related applications, Mat. Zametki {\bf 64} (1998),  637--640.
 
%\bibitem{KT4} B.S. Kashin and V.N. Temlyakov, On a norm and approximation characteristics of classes of functions of several variables,
%Metric theory of functions and related problems in analysis, Izd. Nauchno-Issled. Aktuarno-Finans. Tsentra (AFTs), Moscow, 1999, 69--99.

%\bibitem{KaTe03} B.S. Kashin and V.N. Temlyakov, The volume estimates and their applications,  East J. Approx., {\bf 9}  (2003), 469--485.

%\bibitem{KoTe} S.V. Konyagin and V.N. Temlyakov, The Entropy in Learning Theory. Error Estimates, Constr. Approx., {\bf 25} (2007), 1--27.

%\bibitem{MSS} A. Marcus, D.A. Spielman, and N. Srivastava,
%Interlacing families II: Mixed characteristic polynomials and the Kadison-Singer problem, Annals of Math., {\bf 182} (2015), 327--350.

\bibitem{Mat} J. Matousek, Geometric Discrepancy, Springer, 1999.

 \bibitem{NUU} 
V.K. Nguyen, M. Ullrich and T. Ullrich, 
Change of variable in spaces of mixed smoothness and numerical integration of 
multivariate functions on the unit cube,   arXiv:1511.02036, 2015.


\bibitem{NX} H. Niederreiter and C. Xing, Low-discrepancy sequences and global function fields with many rational places, Finite Fields Appl., {\bf 2} (1996), 241--273.

%\bibitem{NOU} S. Nitzan, A. Olevskii, and A. Ulanovskii,
%{Exponential frames on unbounded sets}, Proc. Amer. Math. Soc., {\bf 144} (2016),109--118.

%\bibitem{OO} D. Offin and K. Oskolkov, A note on orthonormal polynomial bases and wavelets, Constructive Approx. {\bf 9} (1993), 319--325.

\bibitem{RT} G. Rote and F. Tichy, Quasi-Monte Carlo methods and the dispersion of point sequences, Math. Comput. Modelling, {\bf 23} (1996), 9--23.

%\bibitem{Ro}  K.F. Roth,  On irregularities of distribution, 
%Mathematica, {\bf 1} (1954), 73--79.

%\bibitem{Rud} M. Rudelson, Almost orthogonal submatrices of an orthogonal matrix, Izrael J. Math., {\bf 111} (1999), 143--155.

%\bibitem{Ta} M. Talagrand, The generic chaining, Berlin: Springer, 2005. 

%\bibitem{VT27} V.N. Temlyakov, On reconstruction of multivariate periodic functions based on their values at the knots of number-theoretical nets, Analysis Mathematica, {\bf12} (1986), 287--305. 

\bibitem{Rud} D. Rudolf, An Upper Bound of the Minimal Dispersion via Delta Covers,
arXiv:1701.06430v2 [csCG] 27 Jun 2017.

\bibitem{Sch1}   W.M. Schmidt, Irregularities of distribution.VII,
 Acta Arith., {\bf 21} (1972), 45--50.

\bibitem{Sos} J. Sosnovec, A note on minimal dispersion of point sets in the unit cube, 
arXiv:1707.08794v1 [csCG] 27 Jul 2017.

%\bibitem{Tem16}  V.N. Temlyakov,  Approximation by elements of a 
%finite-dimensional subspace of functions from various Sobolev or Nikol'skii 
%spaces, Matem. Zametki, {\bf 43} (1988), 770--786; 
 %English transl. in  Math. Notes, {\bf 43} (1988).

%\bibitem{Tmon} V.N. Temlyakov, Approximation of functions with bounded mixed derivative, Trudy MIAN, {\bf 178} (1986), 1--112. English transl. in Proc. Steklov Inst. Math., {\bf 1} (1989).

\bibitem{TBook}  V.N. Temlyakov, {\em Approximation of periodic functions}, 
Nova Science Publishers, Inc., New York.,  1993.

\bibitem{VT50} V.N. Temlyakov, On error estimates for cubature formulas, Trudy Matem. Inst. Steklova, {\bf 207} (1994), 326--338; English translation in: Proceedings 
of Steklov Inst. Math., {\bf 6} (1995), 299--309.


 \bibitem{T11} V.N. Temlyakov,   Cubature formulas and related questions,
\emph{  J. Complexity}   \textbf{ 19}  (2003),  352--391.

%\bibitem{VT113} V.N. Temlyakov, On universal estimators in learning theory,
%Trudy MIAN im. V.A. Steklova, {\bf 255} (2006), 256--272; English transl. in 
%Proceedings of the Steklov Inst. Math., {\bf 255} (2006), 244--259.

%\bibitem{Tbook} V.N. Temlyakov, Greedy approximation, Cambridge University
%Press, 2011.

%\bibitem{VT156} V.N. Temlyakov, On the entropy numbers of the mixed smoothness function classes, arXiv:1602.08712v1 [math.NA] 28 Feb 2016.

%\bibitem{VT160} V.N. Temlyakov, The Marcinkiewicz-type discretization theorems for the hyperbolic cross polynomials, Jaen  Journal on Approximation, {\bf 9} (2017), No. 1; arXiv: 1702.01617v2 [math.NA] 
%26 May 2017. 

%\bibitem{VT161} V.N. Temlyakov,
%The Marcinkiewicz-type discretization theorems, arXiv:1703.03743v1 [math.NA], 10 Mar 2017.

%\bibitem{VT162} V.N. Temlyakov, Universal discretization, arXiv:1708.08544v1 [math.NA] 28 Aug 2017.

\bibitem{VT163} V.N. Temlyakov, Dispersion of the Fibonacci and the Frolov point sets, arXiv:1709.08158v2 [math.NA] 4 Oct 2017.


\bibitem{MUll} M. Ullrich, On ÒUpper error bounds for quadrature formulas on function classesÓ by K. K. Frolov, In: R. Cools and D. Nuyens (Eds): Monte Carlo and Quasi-Monte Carlo Methods, Springer Proceedings in Mathematics \& Statistics, Vol. 163 (2016), 571--582;
 arXiv:1404.5457.

\bibitem{Ull} M. Ullrich, A lower bound for the dispersion on the torus, arXiv:1510.04617v1 [csCC] 15 Oct 2015.

%\bibitem{Z} A. Zygmund, Trigonometric Series, Cambridge University Press, 1959.
 
\end{thebibliography}
\end{document}